\newcommand{\e}{{\varepsilon}}
\theoremstyle{plain}
\numberwithin{equation}{section}
\newtheorem{thm}{Theorem}[section]
\newtheorem{theorem}[thm]{Theorem}
\newtheorem{lemma}[thm]{Lemma}
\newtheorem{example}[thm]{Example}
\newtheorem{definition}[thm]{Definition}
\newtheorem{proposition}[thm]{Proposition}
\newtheorem{corollary}[thm]{Corollary}
\newtheorem{remark}[thm]{Remark}
\begin{document}

\title{Zeckendorf representations and mixing properties of sequences}
\author{Neil Ma\~{n}ibo}
\address{Fakult\"at f\"ur Mathematik\\
               Universit\"at Bielefeld\\
               Universit\"atsstra\ss e 25 \\
               33615 Bielefeld, Germany}
\email{cmanibo@math.uni-bielefeld.de}
\author{Eden Delight P. Miro}
\address{Department of Mathematics\\
	Ateneo de Manila University\\
	Loyola Heights\\
	1108 Quezon City, Philippines}
\email{eprovido@ateneo.edu}
\author{Dan Rust}
\address{Fakult\"at f\"ur Mathematik\\
               Universit\"at Bielefeld\\
               Universit\"atsstra\ss e 25 \\
               33615 Bielefeld, Germany}
\email{drust@math.uni-bielefeld.de}
\author{Gwendolyn S. Tadeo}
\address{Department of Mathematics\\
                Saint Louis University\\
                Baguio City\\
                Philippines}
\email{gwendolyn.tadeo@obf.ateneo.edu}

\thanks{Research supported in part by the German Research Foundation (DFG) via the Collaborative Research Centre (CRC 1283), the Alexander von Humboldt Foundation and the Commission on Higher Education of the Philippines (CHED)}

\begin{abstract}
We use generalised Zeckendorf representations of natural numbers to investigate mixing properties of symbolic dynamical systems. The systems we consider consist of bi-infinite sequences associated with so-called random substitutions.
We focus on random substitutions associated with the Fibonacci, tribonacci and metallic mean numbers and take advantage of their respective numeration schemes.
\end{abstract}

\maketitle

\section{Introduction}
The Zeckendorf (or Zeckendorf--Lekkerkerker) representation of a natural number \cite{Brown1964,Lekkerkerker}  is a special case of its Ostrowski numerations \cite{Ostrowski} and allows one to write every natural number uniquely as a sum of distinct non-consecutive Fibonacci numbers. Various generalisations of the Zeckendorf representation exist for other sequences of numbers originating from linear relations \cite{Brown,FibHigher,F:numeration,Hoggatt}. The Zeckendorf representation finds applications in many areas, ranging from combinatorial game theory \cite{W:fibonacci-nim} to error-insensitive data compression \cite{WKP:fib-encoding} and mathematical magic tricks \cite{J:magic}. Although Fibonacci numbers and the golden ratio feature heavily in the study of dynamical systems, specific applications of Zeckendorf's theorem are less prevalent.

In studying mixing properties of some symbolic dynamical systems, the fourth author noticed in her thesis \cite{Tadeo2019} the apparent utility of Zeckendorf representations in representing the lengths of partial orbits in these systems. The initial goal was to use this convenient interplay between properties of Fibonacci numbers and symbolic dynamics to prove that the systems under consideration were topologically mixing. In fact, we eventually proved that this is not the case \cite{MRST}. Even though these dynamical systems were found to be non-mixing, Zeckendorf's Theorem can still be used to prove a weaker version of mixing which we have chosen to call \emph{semi-mixing}. We give a self-contained and streamlined account of that approach here.

The dynamical systems under consideration are systems whose state-space is a particular collection of bi-infinite sequences over a finite alphabet and whose action is given by shifting a sequence to the left. These \emph{shift spaces} or \emph{subshifts} are the primary object of study for symbolic dynamicists and can be thought of as universal objects in the study of discrete dynamical systems in general \cite{LM:book}. The dynamical behaviour of the system can vary greatly depending on how the bi-infinite sequences in our subshift are generated. Our sequences are generated by \emph{random substitutions} \cite{RS}, a generalisation of the more classical notion of a substitution \cite{BG:book,Fogg}. In particular, we have chosen to focus our attention on random substitutions which are closely related to the Fibonacci sequence and its generalisations, the \emph{tribonacci} and \emph{metallic mean} sequences.

In Section \ref{SEC:zeckendorf}, we recall Zeckendorf's theorem and discuss its generalisations to sequences coming from other linear recurrence relations, and so introduce the tribonacci numbers and the metallic mean sequences. In Section \ref{SEC:dynamics}, we give a brief introduction to symbolic dynamics and introduce the notion of random substitutions and their associated subshifts. We also discuss some basic properties of the random Fibonacci, tribonacci and metallic mean substitutions. In Section \ref{SEC:semi-mixing}, we define semi-mixing for general dynamical systems and give an equivalent, more convenient definition for subshifts in terms of word combinatorics. The rest of the section is devoted to proving our main results; namely, that the subshifts associated with the random Fibonacci, tribonacci and metallic mean substitutions are semi-mixing.

\section{Zeckendorf representation and its generalisations}\label{SEC:zeckendorf}

The Fibonacci sequence $\left\{f_n\right\}^{\infty}_{n=0}$ is defined by the recurrence relation $f_n=f_{n-1}+f_{n-2}$ for every integer $n\ge 2$, with $f_0=f_1=1$. 
The \emph{Zeckendorf theorem} (also called the \textit{Zeckendorf--Lekkerkerker theorem}) states that every positive integer can be written uniquely as the sum of distinct non-consecutive Fibonacci numbers \cite{Brown1964, Lekkerkerker}.
\begin{theorem}[{Zeckendorf Theorem}]\label{Zeckendorf}
Every positive integer $n$ has a unique representation in the form
$n=\sum^{r}_{i=1} \e_i f_i$
with $\e_i \in \{0,1\}$ for each $0\leq i \leq r$ and $\e_i \e_{i+1}=0$ for $i\geq 0$. 
\end{theorem}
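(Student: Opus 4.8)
The plan is to establish existence and uniqueness separately, both by strong induction on $n$, with the entire argument resting on a single inequality comparing partial sums of non-consecutive Fibonacci numbers against the next Fibonacci number. Throughout I would work with the convention $f_0=f_1=1$ and take the canonical representation to avoid $f_0$, so that "non-consecutive" is encoded exactly by the condition $\e_i\e_{i+1}=0$.

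For \emph{existence} I would use the greedy algorithm. Given $n$, let $f_r$ be the largest Fibonacci number with $f_r\le n$, so that $f_r\le n<f_{r+1}$. Then $0\le n-f_r<f_{r+1}-f_r=f_{r-1}$, and by the induction hypothesis $n-f_r$ admits an admissible representation. The key observation is that, since $n-f_r<f_{r-1}$, no term $f_{r-1}$ or larger can occur in that representation, so its largest index is at most $r-2$; appending $f_r$ therefore yields a representation of $n$ in which no two indices are consecutive. The small base cases are checked by hand.

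For \emph{uniqueness} the central lemma is that if $1\le a_1<a_2<\dots<a_k$ satisfy $a_{j+1}\ge a_j+2$ for all $j$, then $\sum_{j=1}^{k} f_{a_j}<f_{a_k+1}$. I would prove this by induction on $a_k$: the sub-sum over $a_1,\dots,a_{k-1}$ is, by induction, strictly below $f_{a_{k-1}+1}\le f_{a_k-1}$, whence the full sum is below $f_{a_k-1}+f_{a_k}=f_{a_k+1}$ by the defining recurrence. Granting this, suppose $n$ had two distinct admissible representations with largest indices $r$ and $r'$; if $r<r'$ the lemma forces $n<f_{r+1}\le f_{r'}\le n$, a contradiction, so $r=r'$. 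Both representations then contain $f_r$, and subtracting it reduces to a strictly smaller integer, where the induction hypothesis closes the argument.

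The main obstacle is the non-consecutiveness bookkeeping in the existence step: one must verify that the greedily chosen $f_r$ is never adjacent to the top index of the recursively obtained representation, which is precisely where the strict inequality $n-f_r<f_{r-1}$ does the work. A minor subtlety is the degenerate equality $f_0=f_1$, which is why the representation is pinned down to start from index $1$; otherwise the claimed uniqueness would fail on the trivial swap $f_0\leftrightarrow f_1$.
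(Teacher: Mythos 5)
Your proposal is correct, and it is strictly more complete than what the paper itself provides. The paper does not give a full proof of Theorem~\ref{Zeckendorf}: it cites Brown and Lekkerkerker for the result and only sketches the greedy algorithm, checking that the greedy process terminates and produces distinct, decreasing indices $k_1 > \cdots > k_r \ge 0$ --- it never verifies that the chosen indices are \emph{non-consecutive}, and it says nothing at all about uniqueness. Your existence argument follows the same greedy route, but you supply exactly the missing step: the inequality $n - f_r < f_{r+1} - f_r = f_{r-1}$ forces the recursively obtained representation of $n - f_r$ to have top index at most $r-2$, which is what makes the greedy output admissible. Your uniqueness argument, via the dominance lemma that any admissible sum with top index $a_k$ is strictly less than $f_{a_k+1}$, is the standard completion and has no counterpart in the paper. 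You are also right to flag the degeneracy $f_0 = f_1$: pinning the representation to indices $i \ge 1$ is needed both for the strict monotonicity used in the index bookkeeping and to avoid the trivial non-uniqueness from swapping $f_0$ and $f_1$, a point the paper's sketch glosses over by allowing $k_r \ge 0$. In short, what the paper buys by citation, you prove; the two existence arguments coincide, and the uniqueness half is yours alone.
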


The unique representation of $n$ as a sum of non-consecutive Fibonacci numbers 
can be obtained using the \textit{greedy algorithm}, which proceeds as follows. 
First, pick the largest Fibonacci number $f_{k_1}$ such that $f_{k_1}\le n$.  
For $n-f_{k_1}>0$, choose again the largest Fibonacci number $f_{k_2}$ such that $f_{k_2}\le n-f_{k_1}$. 
Continuing the process yields $n=f_{k_1}+\cdots+f_{k_r}$ for some $r\in\mathbb{N}$. Note that the Fibonacci numbers in the expansion are distinct since in each step we consider the largest Fibonacci number satisfying the inequality, which is unique. 
Moreover, the process terminates as $n$ is finite and $f_{k_i}$ is always positive. 
Furthermore, by construction, we get a representation $n=f_{k_1}+\cdots+f_{k_r}$ where $k_1>\cdots>k_r \ge 0$.

One generalisation of the Fibonacci sequence is the \textit{tribonacci sequence} $\left\{t_n\right\}^{\infty}_{n=0}$, which is recursively defined via $t_n=t_{n-1}+t_{n-2}+t_{n-3}$ for every integer $n\ge 3$, with $t_0 = 0$, $t_1 = 1$ and $t_2=1$. 

We could instead generalise the Fibonacci sequence in another way, to the so-called \textit{metallic mean sequences}, sometimes called the \emph{noble mean sequences}.
Fix $m \in \mathbb{N}$. The degree-$m$ metallic mean sequence is given by the recursion $z_{n}=mz_{n-1}+z_{n-2}$ for $n \geq 2$, with $z_0=z_1=1$.
There are generalised Zeckendorf theorems associated with both the tribonacci sequence and the metallic mean sequences.
\begin{theorem}[{Tribonacci Zeckendorf theorem \cite{FibHigher}}] Every positive integer $n$ has a unique representation in the form $n = \sum_{i=2}^r \e_i t_i$ with $\e_i \in \{0,1\}$ for each $0 \leq i \leq r$ and $\e_i \e_{i+1} \e_{i+2} = 0$ for $i \geq 0$.
\end{theorem}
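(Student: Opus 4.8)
The plan is to mirror the structure of the proof of the classical Zeckendorf Theorem: establish existence through the greedy algorithm and then establish uniqueness via an extremal estimate on admissible sums. Throughout, I would call a finite $0/1$-string $(\e_i)$ \emph{admissible} if $\e_i \e_{i+1} \e_{i+2} = 0$ for all $i$, i.e. no three consecutive coefficients equal $1$. Since $t_0 = 0$ and $t_1 = t_2 = 1$, only indices $i \ge 2$ carry genuine information, which is why the sum is taken from $i = 2$; I would record this convention at the outset.

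For existence I would run the greedy algorithm exactly as in the Fibonacci case: pick the largest $t_{k_1}$ with $t_{k_1} \le n$, replace $n$ by $n - t_{k_1}$, and repeat. The process terminates and produces strictly decreasing indices $k_1 > k_2 > \cdots$, so it remains only to verify admissibility. The key estimate is that if $t_k \le n < t_{k+1}$, then the remainder satisfies $n - t_k < t_{k+1} - t_k = t_{k-1} + t_{k-2}$. Feeding this back in shows the greedy choice can produce at most two consecutive indices before being forced to skip: if the first two picks are $t_k$ and $t_{k-1}$, then the running remainder drops below $(t_{k-1}+t_{k-2}) - t_{k-1} = t_{k-2}$, so the next pick has index at most $k-3$. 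This rules out three consecutive $1$'s and yields an admissible representation.

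The heart of the argument, and the step I expect to be the main obstacle, is the extremal lemma underpinning uniqueness: \emph{the maximal admissible sum using only indices in $\{2, \dots, r\}$ equals $t_{r+1} - 1$}. I would prove this by induction on $r$, splitting according to the top pattern. If $\e_r = 0$, the sum is at most $t_r - 1$; if $\e_r = 1$ and $\e_{r-1} = 0$, it is at most $t_r + (t_{r-1}-1) = t_{r+1} - t_{r-2} - 1$; and if $\e_r = \e_{r-1} = 1$, then admissibility forces $\e_{r-2} = 0$, giving at most $t_r + t_{r-1} + (t_{r-2}-1) = t_{r+1}-1$, a bound that is attained. The defining recursion $t_{r+1} = t_r + t_{r-1} + t_{r-2}$ makes each case collapse cleanly, and the third case is exactly where the three-term recurrence (in place of the two-term Fibonacci one) and the three-term admissibility condition are used in tandem; I would treat $r \in \{2,3,4\}$ as explicit base cases.

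With the lemma in hand, uniqueness follows by strong induction on $n$. If $r$ is the largest index appearing in an admissible representation of $n$, then $t_r \le n \le t_{r+1} - 1 < t_{r+1}$; since $t_2 < t_3 < \cdots$ is strictly increasing, the half-open intervals $[t_r, t_{r+1})$ partition the positive integers, so $r$ is forced to be the unique index with $t_r \le n < t_{r+1}$. Hence any two admissible representations of $n$ share the leading term $t_r$; removing it leaves two admissible representations of the strictly smaller number $n - t_r$ (deleting a coefficient cannot create a forbidden triple), which coincide by the induction hypothesis. Combined with existence, this yields the claimed unique admissible representation.
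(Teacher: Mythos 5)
Your proof is correct, but note that the paper itself never proves this statement: it is quoted directly from Carlitz, Hoggatt and Scoville \cite{FibHigher}, and the only proof material in Section 2 is the greedy-algorithm existence sketch for the classical Fibonacci case (where even non-consecutiveness and uniqueness are left unargued). So your proposal is not a variant of the paper's argument but a complete, self-contained replacement for the citation. Your route is the standard one and all the steps check out: the greedy estimate that after two consecutive picks $t_k, t_{k-1}$ the remainder falls below $t_{k-2}$ correctly rules out three consecutive indices; the extremal lemma that the maximal admissible sum on indices $\{2,\dots,r\}$ equals $t_{r+1}-1$ is proved by a clean three-case induction in which the forbidden-triple condition and the three-term recursion $t_{r+1}=t_r+t_{r-1}+t_{r-2}$ interact exactly where they should; and the uniqueness induction correctly pins down the leading index as the unique $r$ with $t_r \le n < t_{r+1}$. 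Two small points are worth making explicit if this were written out in full: first, the convention that index $1$ is never used (forced by $t_1 = t_2 = 1$) is what makes the greedy choice and the uniqueness claim well posed, and you rightly flag it at the outset; second, in the uniqueness step the case $n - t_r = 0$ should be disposed of separately (both residual representations are sums of positive terms equal to zero, hence empty) before invoking the induction hypothesis. What your approach buys beyond the paper's citation is self-containedness, and in particular your extremal lemma is precisely the ingredient that generalises to the $k$-bonacci numeration systems the paper later invokes for its corollary on random $k$-bonacci substitutions, so the same argument would serve there with $k$-term cases replacing the three-term ones.
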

\begin{theorem}[{Metallic Mean Zeckendorf Theorem}]\label{Zrnm}
Let $m \geq 1$ be a natural number and let $z_i$ denote the $i$th element of the degree-$m$ metallic mean sequence. 
Every positive integer $n$ has a unique representation of the form
$n=\sum^{r}_{i=1} \e_i z_i$,  with $\e_i \in \{0,\ldots, m\}$ and if $\e_{i+1} = m$, then $\e_{i} = 0$.
\end{theorem}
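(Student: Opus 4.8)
The plan is to prove the Metallic Mean Zeckendorf Theorem, which has two parts: existence of a representation and uniqueness.

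The plan is to prove both existence and uniqueness of the representation by strong induction on $n$, with the inductive steps controlled by a single bounding lemma. First I would record the elementary facts that $z_k > 0$ for all $k$ and that $z_1 < z_2 < \cdots$ is strictly increasing for indices at least $1$, both immediate from the recurrence $z_n = m z_{n-1} + z_{n-2}$ together with $m \ge 1$ (the only mild subtlety being that $z_0 = z_1 = 1$, so strictness begins at index $1$). The crucial preliminary is the following bound: for every $k \ge 0$ and every admissible choice of digits, $\sum_{i=1}^{k} \e_i z_i \le z_{k+1} - 1$, with equality attained by the alternating string $\e_k = m$, $\e_{k-1} = 0$, $\e_{k-2} = m, \dots$. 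I would prove this by induction on $k$, splitting on whether $\e_k \le m-1$ (in which case the head contributes at most $(m-1)z_k$ and the tail at most $z_k - 1$) or $\e_k = m$ (in which case admissibility forces $\e_{k-1} = 0$ and the tail over positions below $k-1$ contributes at most $z_{k-1} - 1$); in both cases the recurrence collapses the estimate to $z_{k+1} - 1$. Equivalently, this is the telescoping of the identities $m z_j = z_{j+1} - z_{j-1}$.

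For existence, given $n \ge 1$ I would let $r$ be the unique index with $z_r \le n < z_{r+1}$, which exists since $z_1 = 1 \le n$ and $z_k \to \infty$. Setting $\e_r = \lfloor n / z_r \rfloor$, the inequalities $z_r \le n < z_{r+1} = m z_r + z_{r-1}$ and $z_{r-1} < z_r$ give $1 \le \e_r \le m$. The remainder $n' = n - \e_r z_r$ satisfies $0 \le n' < z_r \le n$, so by the induction hypothesis it has an admissible representation whose top index is at most $r - 1$; appending $\e_r$ at position $r$ yields a representation of $n$. The one point requiring care is the junction condition: when $\e_r = m$ one has $n' = n - m z_r < z_{r-1}$, so the top index of $n'$ is at most $r-2$ and hence $\e_{r-1} = 0$, exactly as admissibility demands.

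For uniqueness, suppose $n = \sum_{i=1}^r \e_i z_i$ is any admissible representation with $\e_r \ne 0$. The bounding lemma forces the top index: since $\e_r \ge 1$ we have $n \ge z_r$, while the lemma applied to positions below $r$ (together with $\e_r \le m$, and $\e_{r-1} = 0$ in the boundary case $\e_r = m$) gives $n < z_{r+1}$; thus $r$ is determined by $z_r \le n < z_{r+1}$. Because the tail satisfies $n - \e_r z_r \le z_r - 1 < z_r$, the top digit is also forced, namely $\e_r = \lfloor n / z_r \rfloor$. The remaining sum is then an admissible representation of $n' = n - \e_r z_r < n$, unique by the induction hypothesis, and as in the existence argument the case $\e_r = m$ automatically yields $\e_{r-1} = 0$, so no compatibility is lost at the junction. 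This closes the induction and gives uniqueness.

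I expect the main obstacle to be the correct handling of the admissibility constraint \textup{``}$\e_{i+1} = m \Rightarrow \e_i = 0$\textup{''} at the interface between the leading digit and the recursively treated tail: it is precisely this condition that rules out the carry $m z_{r} + z_{r-1} = z_{r+1}$, and verifying that it is simultaneously necessary for uniqueness and automatically satisfied by the greedy construction is the delicate part. The low-index base cases also deserve attention, since $z_0 = z_1$ breaks strict monotonicity at the very bottom and the sum begins at $i = 1$ rather than $i = 0$.
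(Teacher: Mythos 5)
Your proof is correct and complete. Note that the paper itself contains no proof of this theorem: it remarks that for the alternative initial conditions $z'_0 = 0$, $z'_1 = 1$ the result is due to Hoggatt, states that ``the proof for the metallic mean sequences is essentially the same as for Hoggatt's sequences,'' and omits it. Your argument supplies precisely the kind of proof being alluded to, and it is the standard mechanism behind Zeckendorf-type theorems: the digit-sum bound $\sum_{i=1}^{k} \e_i z_i \le z_{k+1} - 1$ for admissible strings (proved by splitting on $\e_k \le m-1$ versus $\e_k = m$, or by telescoping $m z_j = z_{j+1} - z_{j-1}$), which simultaneously powers the greedy existence construction and, in the uniqueness step, pins down the leading index via $z_r \le n < z_{r+1}$ and then the leading digit via $\e_r = \lfloor n/z_r \rfloor$. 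Your treatment of the genuinely delicate point is exactly right: when $\e_r = m$ the remainder satisfies $n - m z_r < z_{r-1}$, so the junction condition $\e_{r-1} = 0$ is automatic in the greedy construction, and conversely admissibility is what yields $n < z_{r+1}$ in the uniqueness argument, ruling out the carry $m z_r + z_{r-1} = z_{r+1}$. One microscopic slip: in the existence step you invoke $z_{r-1} < z_r$ to conclude $\e_r \le m$, and this strict inequality fails when $r = 1$ (since $z_0 = z_1 = 1$); the weak inequality $z_{r-1} \le z_r$ suffices and the conclusion is unaffected --- indeed you flag this low-index subtlety yourself in your closing remarks.
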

If the initial condition $z'_0=0$, $z'_1=1$ is instead chosen for the metallic mean sequences, then there is a well-known generalisation of Zeckendorf's theorem, first proved by Hoggatt \cite{Hoggatt}.
The proof for the metallic mean sequences is essentially the same as for Hoggatt's sequences, and so we omit the proof.

We collectively refer to the representations of a natural number obtained above as its \emph{Zeckendorf representation} where it will always be clear from context which of the representations is being used. 
If the natural number $n$ has Zeckendorf representation $\sum_{i=1}^r \e_i f_i$, then we write $[n] = \e_r \e_{r-1} \cdots \e_1$ and call $\e_i$ the $i$-th Zeckendorf digit of $n$ (for the tribonacci Zeckendorf representation, the least significant digit is $\e_2$).

\section{Symbolic Dynamical Systems}\label{SEC:dynamics}
For a comprehensive introduction to symbolic dynamics, we refer the reader to the book of Lind and Marcus \cite{LM:book}.
Let $\mathcal{A}=\{a_1, a_2,\dots,a_\ell\}$ be a finite alphabet. For us, we will normally only consider the alphabets $\{a,b\}$ and $\{a,b,c\}$.
Consider the set $\mathcal{A}^n$ of all words $w = w_1 \cdots w_n$ over $\mathcal{A}$ whose length $|w|$ is equal to $n$ and
let $\mathcal{A}^+=\bigcup_{n=1}^\infty \mathcal{A}^n$ denote the set of all non-empty words. 
A \textit{subword} of a word $w=w_1\dots w_n$ is a word $w_{[i,j]}\coloneqq w_i\cdots w_j$ for some $1\le i\le j\le n$, and we write $w_{[i,j]}\prec w$ to denote the subword relation.
We write $u^i\coloneqq \overbrace{uu\cdots uu}^{i-\text{times}}$ to denote the $i$-fold concatenation of $u$ with itself, with $u^0$ denoting the empty word.
If $\mathcal{A}$ is equipped with the discrete topology, then the set
\[
\mathcal{A}^\mathbb{Z} =\left\{ \cdots x_{-2} x_{-1} \cdot x_{0} x_{1} x_{2} \cdots \mid x_i \in \mathcal{A}\right\}
\]
of all bi-infinite sequences over $\mathcal{A}$ forms a compact metrisable space under the product topology.
The \textit{shift map} $\sigma \colon \mathcal{A}^\mathbb{Z} \rightarrow \mathcal{A}^\mathbb{Z}$ given by $\sigma(x)_n=x_{n+1}$ is a homeomorphism.
We define a \textit{subshift} $X\subseteq \mathcal{A}^\mathbb{Z}$ to be a non-empty closed subspace of $\mathcal{A}^\mathbb{Z}$ that is invariant under the shift action $\sigma$. That is, $\sigma(X)=X$.
The \emph{language} of a subshift $X$ is denoted by $\mathcal{L}$ and consists of all subwords of elements of $X$. That is,
\[
\mathcal{L} = \{u \mid u = x_{[i,j]}, x \in X, i \leq j\}.
\]
If $u \in \mathcal{L}$, then we call $u$ \emph{admitted} by $X$.
We let $\mathcal{L}^n\coloneqq \mathcal{L} \cap \mathcal{A}^n$ denote the set of length-$n$ admitted words.

Let $\mathcal{P}(\mathcal{A}^+)$ denote the power set of $\mathcal{A}^+$.
For sets $A, B \in \mathcal{P}(\mathcal{A}^+)$, we let 
\[
AB = \{uv \mid u \in A, v \in B\}
\]
denote the concatenation of sets of words over $\mathcal{A}$.
A \textit{random substitution} is a map $\vartheta \colon \mathcal{A}\to\mathcal{P}(\mathcal{A}^+)$ such that $\vartheta(a)$ is a non-empty finite set for all $a\in \mathcal{A}$.
Let $w = w_1 \cdots w_k \in \mathcal{A}^+$ be a finite word over $\mathcal{A}$. We extend $\vartheta$ to finite words by defining $\vartheta \colon \mathcal{A}^+ \to \mathcal{P}(\mathcal{A}^+)$ via the concatenation of sets $\vartheta(w) \coloneqq \vartheta(w_1) \cdots \vartheta(w_k)$ and we further extend to $\mathcal{P}(\mathcal{A}^+)$ by $\vartheta(A) = \cup_{w \in A}\vartheta(w)$.
A word $w\in \vartheta(v)$ is called a \textit{realisation of $\vartheta$} on the word $v \in \mathcal{A}^+$. 
We say that a word $u\in \mathcal{A}^+$ is $\vartheta$\textit{-legal} if there is a natural number $k$, a letter $a \in \mathcal{A}$ and a word $w \in \vartheta^k(a)$ such that $u\prec w$.
If the set of words $\vartheta(a)$ is a singleton for each letter $a \in \mathcal{A}$,  $\vartheta$ is called \textit{deterministic} and corresponds to the classical notion of a \emph{substitution} \cite{BG:book}.

Given a random substitution $\vartheta$, we are interested in the closed, shift-invariant subspace of $\mathcal{A}^\mathbb{Z}$ called the \emph{random substitution subshift} (RS-subshift) $X_\vartheta$ associated with $\vartheta$, defined by 
\[
X_\vartheta \coloneqq \left\{x\in \mathcal{A}^\mathbb{Z} \mid  u \prec x \Rightarrow u \mbox{ is }\vartheta\mbox{-legal}\right\}.
\]
The tuple $(X_\vartheta, \sigma)$ forms a topological dynamical system. A comprehensive introduction to random substitutions and their subshifts is given in work of Rust and Spindeler \cite{RS}.
We may now introduce our main objects of interest: the random Fibonacci substitution, the random tribonacci substitution, and the random metallic mean substitutions. 

\begin{example}
The \emph{random Fibonacci substitution} introduced by Godr\`{e}che and Luck \cite{Godreche} is given by $\vartheta_{1}\colon a\mapsto \{ab, ba\}, b\mapsto \{a\}$.
We let $X_1\coloneqq X_{\vartheta_1}$ denote the random Fibonacci subshift.
The set of \emph{level-$n$ inflation words} $\vartheta_1^n(a)$ for random Fibonacci are given by
\[
\begin{array}{l}
\vartheta_1^0(a) = \{a\}, \quad \vartheta_1(a)=\{ab,ba\}, \quad \vartheta^2_1(a)=\{aba,baa,aab\},\\
\vartheta^3_1(a)=\{abaab,ababa,baaab,baaba,aabab,aabba,abbaa,babaa\},\:  \ldots.\\
\end{array}
\]
Note that the length of all level-$n$ inflation words of type $a$ are the same and are equal to the $(n+1)$-th Fibonacci number $f_{n+1}$, thus the name. Further, $|\vartheta_1^n(b)| = f_n$.
\end{example}

In a similar way, we may define a generalisation of the random Fibonacci substitution on three letters associated with the tribonacci sequence.

\begin{example}
The rule $\tau \colon a\mapsto \{ab, ba\}, b\mapsto \{ac,ca\}, c\mapsto \{a\}$ is called the \emph{random tribonacci substitution}. We let $X_\tau$ denote the random tribonacci subshift.
Similarly, the set of level-$n$ inflation words $\tau^n(a)$ for random tribonacci are given by
\begin{equation}\label{eq:recursion tribonacci}
\begin{array}{l}
\tau^0(a) = \{a\}, \quad \tau(a)=\{ab,ba\}, \quad \tau^2(a)=\{abac,abca,baac,baca\},\\
\tau^3(a) = \tau(abac) \cup \tau(abca) \cup \tau(baac) \cup \tau(baca) = \{abacaba, baacaba, \ldots \},\: \cdots.
\end{array}
\end{equation}
It is also easy to see that the length of all level-$n$ inflation words of type $a$ are the same and are equal to the $(n+2)$-th tribonacci number $t_{n+2}$. Further, $|\tau^n(b)| = t_{n} + t_{n+1}$ and $|\tau^n(c)| = t_{n+1}$.
\end{example}
Finally, we may now introduce a family of random substitutions associated with the metallic mean sequences, generalising the random Fibonacci substitution. For a more detailed discussion regarding this family, we refer the reader to the thesis of Moll \cite{Moll}. 
\begin{example}
Fix $m>1$. The degree-$m$ \emph{random metallic mean substitution} is given by
\[
\vartheta_{m}\colon a\mapsto \{a^iba^{m-i} \mid 0\leq i \leq m\},\, b\mapsto \{a\}.
\]
We let $X_m \coloneqq X_{\vartheta_m}$ denote the random metallic mean subshift of degree $m$.
As an example, when $m = 2$, the random metallic mean substitution is given by $\vartheta_2 \colon a \mapsto \{aab, aba, baa\}, b \mapsto \{a\}$ and has level-$n$ inflation words
\[
\begin{array}{l}
\vartheta_2^0(a) = \{a\}, \quad \vartheta_2^1(a)=\{aab, aba, baa\},\\
\vartheta_2^2(a)=\{aabaaba, baaaaba, aabbaaa, baabaaa, aabaaab, baaaaab, aababaa, \ldots\},\: \ldots.\\
\end{array}
\]
It can easily be checked that the uniform length of the level-$n$ inflation words of type $a$ for the degree-$m$ random metallic mean substitution is the $(n+1)$-th degree-$m$ metallic mean number $z_{n+1}$. Further, $|\vartheta^n(b)| = z_{n}$
\end{example}
The substitution matrix of the random Fibonacci substitution $\vartheta_{1}$  is given by 
$\begin{psmallmatrix}
1 & 1 \\
1 & 0 \\
\end{psmallmatrix}$,
with eigenvalues $\lambda_{1,2}=\big\{\frac{1\pm\sqrt{5}}{2}\big\}$, where the Perron--Frobenius (PF) eigenvalue $\lambda_1$ is the golden ratio. 
Since $\lambda_1$ is a Pisot number, i.e., all of its algebraic conjugates lie inside the unit disk, we say that $\vartheta_{1}$ is a \emph{Pisot random substitution}.
The substitution matrices of the random tribonacci substitution and random metallic mean substitutions are 
$\begin{psmallmatrix}
1 & 1 & 1 \\
1 & 0 & 0 \\
0 & 1 & 0 \\
\end{psmallmatrix}$ and 
$\begin{psmallmatrix}
m & 1 \\
1 & 0 \\
\end{psmallmatrix}$, with PF-eigenvalues the tribonacci constant $1.83929\cdots$ and the metallic means $\frac{1}{2}(m+\sqrt{m^2+4})$, respectively.
It is easy to verify that their corresponding PF-eigenvalues are Pisot numbers \cite[Sec.~6]{BG}.  
Hence, they are also Pisot random substitutions.

\section{Semi-mixing of random substitutions}\label{SEC:semi-mixing}
We are now in a position to show that the subshifts associated with the random substitutions introduced in the previous section satisfy a property that is weaker than mixing called semi-mixing, which was first introduced in the thesis of the fourth author \cite{Tadeo2019}.

\begin{definition}
A dynamical system $(X,T)$ is called \emph{semi-mixing} if there exists a proper clopen subset $U\subseteq X$ such that for every open set $V$ in $X$ there exists a natural number $N$ such that for every $n\ge N$, $T^n(V)\cap U\ne \varnothing.$ We say that $(X,T)$ is \textit{semi-mixing with respect to $U$}.
\end{definition}
For a symbolic dynamical system $(X, \sigma)$, we have the following equivalent condition for $(X, \sigma)$ to be semi-mixing.

\begin{proposition}[{\cite[Prop.~4.4]{Tadeo2019}}]\label{pro:semi-mixing}
A subshift $(X,\sigma)$  is \emph{semi-mixing} if and only if there exists a length $\ell$ and a proper subset of length-$\ell$ admitted words $\mathcal{S} \subsetneq \mathcal{L}^\ell$ such that for every word $w\in \mathcal{L}$, there exists a natural number $N$ such that for every $n\ge N$, there exists a word $u$ of length $n$ and a word $s \in \mathcal{S}$ such that $wus \in \mathcal{L}$. 
\end{proposition}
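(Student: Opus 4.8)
The plan is to prove both implications by translating between the topological language of clopen sets and the combinatorial language of admitted words, using two standard facts about subshifts. First, the cylinder sets $[w]_k \coloneqq \{x \in X : x_{[k,k+|w|-1]} = w\}$ with $w \in \mathcal{L}$ form a basis of clopen sets for the topology on $X$. Second, every clopen subset of $X$ is (by compactness) a finite union of cylinders, and hence, after enlarging all the windows to a common window $[c,c+\ell-1]$, may be written as $U = \{x \in X : x_{[c,c+\ell-1]} \in \mathcal{S}\}$ for some $\mathcal{S} \subseteq \mathcal{L}^\ell$. Throughout I read ``open set $V$'' as ``nonempty open set'', as is standard for mixing-type conditions, and I will repeatedly use the monotonicity observation that if $V_0 \subseteq V$ and $\sigma^n(V_0) \cap U \neq \varnothing$ then $\sigma^n(V) \cap U \neq \varnothing$; since every nonempty open set contains a nonempty cylinder, it therefore suffices to verify the defining condition of semi-mixing on cylinders.

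For the forward implication, suppose $(X,\sigma)$ is semi-mixing with respect to a proper clopen set $U$, and write $U = \{x : x_{[c,c+\ell-1]} \in \mathcal{S}\}$ as above. I would first check that $\mathcal{S}$ is a proper subset of $\mathcal{L}^\ell$: if $\mathcal{S} = \mathcal{L}^\ell$ then every $x \in X$ would satisfy $x_{[c,c+\ell-1]} \in \mathcal{L}^\ell$ and hence $U = X$, contradicting properness; and $\mathcal{S} \neq \varnothing$ since $U \neq \varnothing$. Now fix $w \in \mathcal{L}$ and apply semi-mixing to the cylinder $V = [w]_0$, obtaining $N_0$ with $\sigma^n(V) \cap U \neq \varnothing$ for all $n \ge N_0$. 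Unwinding this says that for each such $n$ there is $x \in X$ with $x_{[0,|w|-1]} = w$ and $x_{[n+c,\,n+c+\ell-1]} \in \mathcal{S}$. Once $n$ is large enough that $n + c \ge |w|$, the finite word $x_{[0,\,n+c+\ell-1]}$ has the form $wus$ with $s = x_{[n+c,\,n+c+\ell-1]} \in \mathcal{S}$ and $|u| = n + c - |w|$, and it lies in $\mathcal{L}$. Reparametrising by the gap length $m \coloneqq n + c - |w|$ then yields the desired statement with $N \coloneqq \max(N_0,\,|w|-c) + c - |w|$, or indeed any sufficiently large threshold.

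For the reverse implication, I would take the combinatorial data $\ell$ and $\mathcal{S} \subsetneq \mathcal{L}^\ell$ and define the candidate $U \coloneqq \bigcup_{s \in \mathcal{S}} [s]_0 = \{x : x_{[0,\ell-1]} \in \mathcal{S}\}$, which is clopen as a finite union of cylinders. It is proper because any $s^\ast \in \mathcal{L}^\ell \setminus \mathcal{S}$ is admitted, hence occurs in some $y \in X$, and a suitable shift of $y$ lies outside $U$; it is nonempty because applying the hypothesis to any single $w \in \mathcal{L}$ forces $\mathcal{S} \neq \varnothing$, and any $s \in \mathcal{S}$ is admitted so $[s]_0 \neq \varnothing$. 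To verify semi-mixing it suffices, by the reduction above, to treat a cylinder $V = [w]_k$ with $w \in \mathcal{L}$. Applying the hypothesis to $w$ gives $N_w$ such that for every $j \ge N_w$ there are $u$ with $|u| = j$ and $s \in \mathcal{S}$ with $wus \in \mathcal{L}$. Picking $y \in X$ containing $wus$ and translating it so that the $w$-block sits at coordinate $k$ produces an $x \in [w]_k$ whose $s$-block sits at coordinate $k + |w| + j$; choosing $j = n - k - |w|$ places that block exactly at coordinate $n$, so that $x_{[n,\,n+\ell-1]} = s \in \mathcal{S}$, i.e.\ $\sigma^n(x) \in U$ while $x \in V$. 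This gives $\sigma^n(V) \cap U \neq \varnothing$ for all $n \ge N_w + k + |w|$, as required.

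The routine but genuinely fiddly part throughout is the index bookkeeping: the shift amount $n$ in the topological definition and the gap length in the combinatorial definition do not coincide but differ by the constant $|w|$ together with the window offset $c$ (respectively the cylinder position $k$), so one must reparametrise the threshold $N$ in each direction and confirm that the rays $[N,\infty)$ still match up. The only genuinely conceptual point, rather than bookkeeping, is the properness dictionary in both directions, namely that $\mathcal{S} = \mathcal{L}^\ell$ corresponds exactly to $U = X$; this is what guarantees that a \emph{proper} witness on one side produces a \emph{proper} witness on the other.
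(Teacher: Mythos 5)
Your proof is correct: both directions are sound, the compactness/cylinder-basis reduction, the index bookkeeping between the shift amount and the gap length, and the properness dictionary ($\mathcal{S} = \mathcal{L}^\ell$ if and only if $U = X$) are all handled properly. Note that the paper itself does not prove this proposition --- it is quoted from the fourth author's thesis \cite[Prop.~4.4]{Tadeo2019} --- but your reverse-direction construction $U = \bigcup_{s \in \mathcal{S}} \mathcal{Z}_s$ is precisely the choice recorded in the remark immediately following the proposition, so your argument is the standard one the paper implicitly has in mind.
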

Further, the finite set of words $\mathcal{S}$ can be chosen so that if $(X,\sigma)$ is semi-mixing with respect to a clopen set $U$, then $U = \bigcup_{s \in \mathcal{S}} \mathcal{Z}_u$ where $\mathcal{Z}_u\coloneqq \{x \in X \mid x_{[0,|s|-1]} = s\}$ is the cylinder set at $0$ for the word $s$. As such, we can also unambiguously say that $(X,\sigma)$ is semi-mixing with respect to the finite set of words $\mathcal{S}$.

\begin{remark}
Semi-mixing is a dynamical invariant. That is, it is preserved under  topological conjugacy. Unlike topological mixing, it does not necessarily imply topological transitivity \cite[Ex.~4.1]{Tadeo2019}.
\end{remark}

As the only dynamical action considered here is the shift action $\sigma$, we will suppress the pair notation $(X,\sigma)$ and refer to $X$ unambiguously as the subshift.
In this section, we establish the following main result. 
\begin{theorem}\label{thm:main-semi-mixing}
The following random substitution subshifts are semi-mixing:
\begin{enumerate}
\item The random Fibonacci subshift $X_1$ (with respect to $\mathcal{S}_1 = \left\{ab,ba\right\}$)
\item The random tribonacci subshift $X_{\tau}$ (with respect to $\mathcal{S}_\tau = \left\{ab,ba,ac,ca\right\}$)
\item The random metallic mean subshift $X_m$ (with respect to $\mathcal{S}_m = \left\{a^iba^{m-i} \mid 0\leq i \leq m \right\}$).
\end{enumerate}
\end{theorem}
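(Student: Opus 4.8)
The plan is to verify the combinatorial criterion of Proposition~\ref{pro:semi-mixing} in each of the three cases, with $\mathcal{S}$ as prescribed and with $\ell = 2$ for the Fibonacci and tribonacci subshifts and $\ell = m+1$ for the degree-$m$ metallic mean subshift. Properness of $\mathcal{S}$ is immediate: the word $aa$ is legal in $X_1$ and in $X_\tau$ (it occurs in $baa \prec \vartheta_1^2(a)$ and in $baac \prec \tau^2(a)$) but lies in neither $\mathcal{S}_1$ nor $\mathcal{S}_\tau$, while $a^{m+1}$ is legal in $X_m$ (a long run of $a$s arises at the junction of two suitable realisations of $\vartheta_m(a)$) but is not of the form $a^i b a^{m-i}$. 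So in every case $\mathcal{S}\subsetneq\mathcal{L}^\ell$. I will treat the Fibonacci case in detail and then indicate the modifications for the other two.

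Fix a legal word $w$ of $X_1$. For the criterion it suffices to show that the set $A_w \coloneqq \{\,|v| : wv \in \mathcal{L},\ v = us \text{ for some } s\in\mathcal{S}_1\,\}$ is cofinite, since then taking $N$ with $\{N+2,N+3,\dots\}\subseteq A_w$ and splitting each such $v$ as $us$ gives exactly the required words $wus$. The key structural input is the self-similar decomposition of inflation words: using the realisation $ab \in \vartheta_1(a)$ at the outermost level one has $\vartheta_1^{k}(a) \supseteq \vartheta_1^{k-1}(a)\,\vartheta_1^{k-2}(a)$, and iterating this peels off a descending sequence of type-$a$ inflation blocks whose lengths are the Fibonacci numbers $f_k, f_{k-1}, \dots$. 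Reading $\mathcal{S}_1 = \vartheta_1(a)$ as the set of level-one blocks, a legal tail $v$ that ends exactly at such a block boundary with final block in $\mathcal{S}_1$ is precisely a concatenation of type-$a$ inflation blocks, and the admissible choices of which blocks to include are governed by a non-consecutiveness constraint on their levels, which is exactly the constraint $\e_i\e_{i+1}=0$ of Theorem~\ref{Zeckendorf}.

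First I would use primitivity of $\vartheta_1$ to anchor $w$: there is a fixed legal word $w' = w\,y$ having $w$ as a prefix and ending in a letter $a$ that begins a fresh, arbitrarily high-level inflation block, so that everything appended after this $a$ may be chosen freely among realisations of $\vartheta_1^{k}(a)$ for large $k$. Then, given a target length, I would run the greedy algorithm of Zeckendorf's theorem on that length minus the bounded correction coming from $y$: writing it as $f_{k_1}+\cdots+f_{k_r}$ with $k_1>\cdots>k_r$ non-consecutive, I would assemble the tail by nesting the corresponding inflation blocks according to the decomposition above, choosing at each boundary a realisation that ends in $ab$ or $ba$ so that the final block lands in $\mathcal{S}_1$. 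Since by Theorem~\ref{Zeckendorf} every positive integer admits such a representation, every sufficiently large length is attained and $A_w$ is cofinite.

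The main obstacle is the middle step made precise: establishing the exact dictionary between lengths of legal tails terminating in $\mathcal{S}_1$ and integers carrying a valid Zeckendorf representation, and checking that the nested blocks concatenate legally and join legally onto $w'$, uniformly in $w$. The generous local freedom of random substitutions (for instance $aa$, $bb$, $ab$, $ba$ are all legal in $X_1$) is what makes these junctions admissible, and the non-consecutiveness condition is exactly what prevents the block pattern from forcing an illegal configuration; the bounded correction from $y$ and the behaviour near the threshold $N$ require a little separate bookkeeping. The tribonacci and metallic mean cases follow the same scheme with their own numeration systems: for $X_\tau$ one uses the decomposition $\tau^{k}(a)\supseteq \tau^{k-1}(a)\tau^{k-2}(a)\tau^{k-3}(a)$ with block lengths the tribonacci numbers and the constraint $\e_i\e_{i+1}\e_{i+2}=0$, taking $\mathcal{S}_\tau = \tau(a)\cup\tau(b)$ as the level-one blocks; for $X_m$ one uses that each $a$ inflates to one of the $m+1$ blocks $a^iba^{m-i}$, so that up to $m$ copies of a given length-$z_j$ block may be inserted, matching the digit set $\{0,\dots,m\}$ and the adjacency constraint of Theorem~\ref{Zrnm}, with $\mathcal{S}_m = \vartheta_m(a)$ and $\ell = m+1$.
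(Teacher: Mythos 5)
Your global strategy --- reduce to Proposition~\ref{pro:semi-mixing}, anchor $w$ inside a high-level inflation word, then run the greedy Zeckendorf algorithm to assemble a tail of any prescribed length ending in a word of $\mathcal{S}$ --- is the same skeleton as the paper's proof of Theorem~\ref{THM:semi-fib}. But the step you explicitly defer as ``the main obstacle'' (the dictionary between lengths of legal tails ending in $\mathcal{S}_1$ and Zeckendorf representations, together with the legality of all the junctions) is not a detail to be filled in later: it is the entire technical content of the theorem. In the paper it is carried by the two inductive Lemmas~\ref{LEM:tech1} and~\ref{LEM:tech2} (with analogues \ref{LEM:tech1_trib}, \ref{LEM:tech2_trib}, \ref{LEM:tech1_metallic}, \ref{LEM:tech2_metallic}), whose proofs require specific realisation choices at every inductive step. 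A proposal that postpones exactly this step is a plan, not a proof.

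Moreover, the dictionary you do sketch is wrong as stated, so executing the plan literally would fail. Your building blocks are type-$a$ inflation words: in the peeling $\vartheta_1^k(a)\supseteq\vartheta_1^{k-1}(a)\vartheta_1^{k-2}(a)$ every block $\vartheta_1^{j}(a)$ has length $f_{j+1}\geq 2$, and with your constraints (descending, non-consecutive levels, final block in $\mathcal{S}_1=\vartheta_1(a)$) the achievable tail lengths are exactly $f_2+\sum f_i$ over non-consecutive indices $i\geq 4$. That set is not cofinite: by the uniqueness in Theorem~\ref{Zeckendorf}, an integer such as $f_k-1$, whose Zeckendorf representation $f_{k-1}+f_{k-3}+\cdots$ terminates in $f_1$ or $f_2$, admits no representation by non-consecutive Fibonacci numbers with all indices $\geq 4$; hence every length $f_k+1$ is missed, for every $k\geq 4$ (the bounded correction from $y$ does not change this). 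So your own claimed dictionary contradicts the cofiniteness of $A_w$ that you need. The paper avoids this by using \emph{type-$b$} blocks: $|\vartheta_1^i(b)|=f_i$ exactly, including $f_1=1$, so that $u\in(\vartheta_1^p(b))^{\e_p}\cdots(\vartheta_1(b))^{\e_1}$ has length with digit string precisely $\e_p\cdots\e_1$, and such $u$, followed by $ab$ or $ba$, is glued between two high-level type-$a$ blocks; the legality of these configurations is precisely what Lemmas~\ref{LEM:tech1} and~\ref{LEM:tech2} establish. Finally, your assertion that non-consecutiveness ``is exactly what prevents the block pattern from forcing an illegal configuration'' reverses the logic: legality has nothing to do with non-consecutiveness (the remark after Theorem~\ref{THM:semi-fib} even notes that only completeness of the Fibonacci sequence, not uniqueness of representations, is needed); the digit constraint merely parameterises which lengths the induction produces.
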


We should mention that it is actually extremely easy to show that each of these subshifts is semi-mixing with respect to the single word $\mathcal{S} = \{a\}$. This is because, for any legal word $u$, as long as $N$ is large enough, then for $|w| = n \geq N$, either $uw$ is legally followed by an $a$, in which case we are done, or it is legally followed by a $b$ (or a $c$ in the case of tribonacci). Without loss of generality, if $uwb$ is legal, then $b$ is contained in some inflation word of the type $\vartheta(a)$ (no matter which of the three substitutions we choose). Hence, all words in $uw'\vartheta(a)$ are legal for some subword $w' \prec w$. But then $\vartheta(a)$ contains words with an $a$ in all possible positions, and so by choosing some other realisation, we can force an $a$ to appear exactly $n$ places to the right of $u$. So $X$ is semi-mixing with respect to $\mathcal{S} = \{a\}$. 

\subsection{Random Fibonacci}\label{SEC:fib}
We begin with the random Fibonacci subshift $X_{1}$. We will show that $X_1$ is semi-mixing with respect to $\mathcal{S}_{1}=\{ab,ba\}$.
The key observation in our proof will be the fact that, if $[n] = \e_r \cdots \e_1$ is the Zeckendorf representation for $n$ and
\[
u \in \left(\vartheta_1^{r}(b)\right)^{\e_r} \cdots \: \left(\vartheta_1^{\phantom{r}}(b)\right)^{\e_1},
\]
then $[|u|] = [n] = \e_r \cdots \e_1$, and for any $v \in \vartheta_1(u)$, $[|v|] = \e_r \cdots \e_1 0$ and $[|va|] = \e_r \cdots \e_1 1$.
This observation will form the basis for an inductive argument in the length of the Zeckendorf representation of the word $u$. Our result hinges on two technical lemmas regarding legal words and subwords of the above form.
\begin{lemma}\label{LEM:tech1}
If either of $\vartheta_1^p(a)uab$ or $\vartheta_1^p(a)uba$ contain legal words for some word $u$, then both $\vartheta_1^p(a)\vartheta_1(u)ab$ and $\vartheta_1^p(a)\vartheta_1(u)\vartheta_1(b)ba$ contain legal words.
\end{lemma}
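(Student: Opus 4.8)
The plan is to push the hypothesis through a single application of $\vartheta_1$ and then split off a level-$p$ prefix, so that the prefix in the conclusion stays at level $p$ — this is the whole point, namely that I do \emph{not} inflate the prefix up to level $p+1$. Throughout I use two facts that are immediate from the definition of legality: every subword of a legal word is legal, and if $z$ is legal then every realisation $z' \in \vartheta_1(z)$ is legal (embed $z \prec W \in \vartheta_1^k(a)$, realise the surrounding letters arbitrarily, and read off $z' \prec W' \in \vartheta_1^{k+1}(a)$).

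Everything then reduces to the following \emph{prefix-splitting} fact, which I expect to be the main obstacle: for every $p \geq 0$ and every $w_0 \in \vartheta_1^p(a)$, some realisation $W_0 \in \vartheta_1(w_0)$ ends with a word $w' \in \vartheta_1^p(a)$. Granting this and fixing any $v \in \vartheta_1(u)$, suppose first that $\vartheta_1^p(a)uab$ contains a legal word $w_0uab$. Then $w_0ua$ is legal, so every realisation in $\vartheta_1(w_0ua)=\vartheta_1(w_0)\vartheta_1(u)\{ab,ba\}$ is legal; choosing $W_0vab$ and passing to a subword gives a legal word $w'vab \in \vartheta_1^p(a)\vartheta_1(u)ab$. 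Likewise $w_0uab$ legal makes $W_0vaba \in \vartheta_1(w_0uab)$ legal (take $aba \in \vartheta_1(ab)$), whose subword $w'v\,a\,ba \in \vartheta_1^p(a)\vartheta_1(u)\vartheta_1(b)ba$ is legal. If instead $\vartheta_1^p(a)uba$ contains a legal word $w_0uba$, then the single legal realisation $W_0vaba \in \vartheta_1(w_0uba)$ (using $a=\vartheta_1(b)$ and $ba\in\vartheta_1(a)$) has both $w'vab$ and $w'v\,a\,ba$ as subwords, yielding legal words in $\vartheta_1^p(a)\vartheta_1(u)ab$ and $\vartheta_1^p(a)\vartheta_1(u)\vartheta_1(b)ba$ respectively. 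In each case both conclusions follow at once, so the case split on which of $\vartheta_1^p(a)uab$, $\vartheta_1^p(a)uba$ is legal is handled uniformly.

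It remains to prove the prefix-splitting fact, which is the real content of the lemma. The tool is the self-similar decomposition $\vartheta_1^{p+1}(a)=\vartheta_1^{p-1}(a)\vartheta_1^p(a)\cup\vartheta_1^p(a)\vartheta_1^{p-1}(a)$, which follows from $\vartheta_1(a)=\{ab,ba\}$ together with $\vartheta_1^n(b)=\vartheta_1^{n-1}(a)$. If $w_0$ ends in a level-$(p-1)$ inflation word, I realise that trailing block as any level-$p$ word and finish immediately. The delicate case is when the required level-$p$ suffix of $W_0$ must straddle an inflation boundary — for example $baa\in\vartheta_1^2(a)$, where among $\vartheta_1(baa)=\{aabab,aabba,abaab,ababa\}$ only $abaab$ and $ababa$ end in a level-$2$ word. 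I would resolve this by an interleaved strong induction on $p$, coupling the prefix-splitting statement $S(p)$ with the auxiliary statement $S'(p)$ that some realisation of a level-$p$ word ends in a level-$(p-1)$ word; using the same decomposition together with the Fibonacci recursion $f_{p+1}=f_p+f_{p-1}$ (so that a straddling length-$f_{p+1}$ suffix splits as a level-$(p-2)$ word followed by a level-$(p-1)$ word and is therefore a level-$p$ word), one reduces $S(p)$ to $S'(p-1)$ and $S'(p)$ to $S(p-1)$, with the small cases $p\in\{0,1\}$ checked by hand. Confirming that this coupled induction closes cleanly is the crux of the whole argument.
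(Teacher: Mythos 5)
Your reduction is correct, and the one step you flagged as unverified does in fact close. With $S(p)$ (every $w_0 \in \vartheta_1^p(a)$ has a realisation $W_0 \in \vartheta_1(w_0)$ with a suffix in $\vartheta_1^p(a)$) and $S'(p)$ (some realisation has a suffix in $\vartheta_1^{p-1}(a)$), split $w_0$ according to the top-level choice in $\vartheta_1(a) = \{ab,ba\}$, so that $w_0 \in \vartheta_1^{p-1}(a)\vartheta_1^{p-1}(b)$ or $w_0 \in \vartheta_1^{p-1}(b)\vartheta_1^{p-1}(a)$. In the second case $S(p)$ is free: the realisation of the right-hand block already lies in $\vartheta_1^p(a)$ and is a suffix. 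In the first case, $S'(p-1)$ applied to the left block yields a realisation with suffix in $\vartheta_1^{p-2}(a) = \vartheta_1^{p-1}(b)$, and the realisation of the right block lies in $\vartheta_1^p(b) = \vartheta_1^{p-1}(a)$, so together they form a suffix in $\vartheta_1^{p-1}(b)\vartheta_1^{p-1}(a) \subseteq \vartheta_1^p(a)$; this is exactly your length bookkeeping $f_{p+1} = f_{p-1} + f_p$. Dually, $S'(p)$ is free in the first case and follows from $S(p-1)$ in the second. The bases $S(0)$, $S(1)$, $S'(1)$ hold by inspection, so your proof is complete.

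It is, however, a genuinely different and heavier route than the paper's. The paper applies $\vartheta_1$ once (choosing the realisation $aba$ of the tail $ab$ or $ba$), notes $\vartheta_1^p(b)\vartheta_1^p(a)\vartheta_1(u)aba \subseteq \vartheta_1^{p+1}(a)\vartheta_1(u)aba$, and strips the $\vartheta_1^p(b)$ block to return to a level-$p$ prefix --- no prefix-splitting at all. Read strictly existentially, that restriction step has precisely the defect you isolated: the legal word produced lives in $\vartheta_1(w_0)\vartheta_1(u)aba$ for one specific $w_0$, and $\vartheta_1(w_0)$ need not meet $\vartheta_1^p(b)\vartheta_1^p(a)$ a priori (your $baa$ example shows the realisation sets of individual level-$p$ words are proper subsets of $\vartheta_1^{p+1}(a)$). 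The paper's argument is nevertheless sound in the universal reading, which is what actually propagates in Theorem \ref{THM:semi-fib}: there \emph{every} word of $\vartheta_1^{p+1}(a)uab$ (or $uba$) is legal, since Lemma \ref{LEM:tech2} together with legality of $aa$ makes all of them subwords of elements of $\vartheta_1^{p+1}(aa)$; and the property ``all words in the set are legal'' passes through the set identity $\vartheta_1(\vartheta_1^p(a)) = \vartheta_1^{p+1}(a)$, after which restricting to the subset $\vartheta_1^p(b)\vartheta_1^p(a)$ is legitimate. So the trade-off is this: the paper's two-line proof really proves (and uses) the universal variant of the lemma, while you prove the literal existential statement at the cost of the pointwise prefix-splitting fact --- genuine extra content, but machinery the application never requires. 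Had you noticed the universal strengthening, your entire third paragraph could have been deleted.
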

\begin{proof}
Suppose that some word in $\vartheta_1^p(a)uab$ is legal.
As $\vartheta_1^p(a)uab$ contains a legal word, then by applying $\vartheta_1$ we see that $\vartheta_1^{p+1}(a)\vartheta_1(u)aba$ contains a legal word.
Note that
\[
\vartheta_1^p(b)\vartheta_1^{p}(a)\vartheta_1(u)aba \subset \vartheta_1^{p+1}(a)\vartheta_1(u)aba
\]
and so $\vartheta_1^{p}(a)\vartheta_1(u)aba$ contains a legal word.
Hence, $\vartheta_1^{p}(a)\vartheta_1(u)ab$ and $\vartheta_1^{p}(a)\vartheta_1(u)\vartheta_1(b)ba$ both contain legal words.

Now suppose that some word in $\vartheta_1^p(a)uba$ is legal.
As $\vartheta_1^p(a)uba$ contains a legal word, then by applying $\vartheta_1$ we see that $\vartheta_1^{p+1}(a)\vartheta_1(u)aba$ contains a legal word.
Note that
\[
\vartheta_1^p(b)\vartheta_1^{p}(a)\vartheta_1(u)aba \subset \vartheta_1^{p+1}(a)\vartheta_1(u)aba
\]
and so $\vartheta_1^{p}(a)\vartheta_1(u)aba$ contains a legal word.
Hence, $\vartheta_1^{p}(a)\vartheta_1(u)ab$ and $\vartheta_1^{p}(a)\vartheta_1(u)\vartheta_1(b)ba$ both contain legal words.
\end{proof}
\begin{lemma}\label{LEM:tech2}
Let $p \geq 1$ be given and let $n\geq 1$ be a natural number whose Zeckendorf representation $[n] = \e_r \cdots \e_1$ has $r=p$ and $\e_r = 1$. Then for all words $u \in (\vartheta_1^p(b))^{\e_p} \cdots (\vartheta_1(b))^{\e_1}$, either $uab$ or $uba$ appear as a prefix of some word in $\vartheta_1^{p+1}(a)$.
\end{lemma}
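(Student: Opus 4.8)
The plan is to induct on $p$, using the recursion $\vartheta_1^{i}(b)=\vartheta_1(\vartheta_1^{i-1}(b))$ together with the elementary fact that $\vartheta_1$ preserves prefixes: if $x$ is a prefix of some $w\in\vartheta_1^{p}(a)$, then writing $w=xx'$ gives $\vartheta_1(w)=\vartheta_1(x)\vartheta_1(x')$, so every realisation of $x$ under $\vartheta_1$ is a prefix of some word in $\vartheta_1^{p+1}(a)$. I will also use the set identities $\vartheta_1(A)\vartheta_1(B)=\vartheta_1(AB)$ and $(\vartheta_1(A))^{k}=\vartheta_1(A^{k})$, which let me peel off one application of $\vartheta_1$ from the product set defining $u$. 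For the base case $p=1$, the hypotheses force $n=f_1=1$ and $u=a$, and both $uab=aab$ and $uba=aba$ already lie in $\vartheta_1^{2}(a)=\{aba,baa,aab\}$.

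For the inductive step I would distinguish on the least significant Zeckendorf digit $\e_1$. If $\e_1=0$, the identities above show that $u\in\vartheta_1(u^-)$ for some $u^-$ lying in the analogous product set for the integer with representation $\e_p\cdots\e_2$, whose leading digit sits at position $p-1$; the induction hypothesis then gives that $u^-ab$ or $u^-ba$ is a prefix of some $w\in\vartheta_1^{p}(a)$. Applying $\vartheta_1$ and realising $u^-$ as $u$, prefix-preservation places $u\,aba$ (a realisation of $u^-ab$ since $aba\in\vartheta_1(ab)$, or of $u^-ba$ since $aba\in\vartheta_1(ba)$) as a prefix of a word in $\vartheta_1^{p+1}(a)$; as $uab\prec u\,aba$, the word $uab$ is the desired prefix.

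If instead $\e_1=1$, then Zeckendorf forces $\e_2=0$, so $u=\hat u a$ with $\hat u\in\vartheta_1(u^-)$ for some $u^-$ in the product set for $\e_p\cdots\e_3$, again with leading digit at position $p-1$. The induction hypothesis gives $u^-ab$ or $u^-ba$ as a prefix of some $w\in\vartheta_1^{p}(a)$, and here the trailing $a$ is absorbed into the realisations: from $u^-ab$ I obtain $uba=\hat u\,aba\in\vartheta_1(u^-)\vartheta_1(ab)$, and from $u^-ba$ I obtain $uab=\hat u\,aab\in\vartheta_1(u^-)\vartheta_1(ba)$, each being a prefix of a word in $\vartheta_1^{p+1}(a)$ by prefix-preservation.

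The main obstacle is the bookkeeping that confirms $u^-$ has its leading Zeckendorf digit exactly at position $p-1$ with value $1$, so that the induction hypothesis applies. This rests on the no-two-consecutive-ones property: it guarantees $\e_2=0$ in the case $\e_1=1$, which is precisely what isolates the extra trailing $a$ and lets it be absorbed cleanly into the inflation words $\vartheta_1(ab)=\{aba,baa\}$ and $\vartheta_1(ba)=\{aab,aba\}$ when lifting by $\vartheta_1$.
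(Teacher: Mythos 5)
Your proof is correct and follows essentially the same route as the paper's: induction on $p$ with a case split on the least significant Zeckendorf digit $\e_1$, using the realisations $aba \in \vartheta_1(ab) \cap \vartheta_1(ba)$ and $aab \in \vartheta_1(ba)$ to produce the appended $ab$ or $ba$ (and to absorb the trailing $a$ when $\e_1 = 1$). The only difference is presentational: the paper lifts the inductive hypothesis from level $p$ up to level $p+1$, whereas you peel one application of $\vartheta_1$ off $u$ to descend from level $p$ to $p-1$; these are the same argument read in opposite directions.
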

\begin{proof}
We proceed by induction on $p \geq 1$. As a base case, note that $\vartheta_1^2(a) = \{aba,baa,aab\}$ and for $u=a \in \vartheta_1(b)$ it is true that $uab = aab$ is a prefix of $aab \in \vartheta_1^2(a)$, hence the case $p=1$ is true.
Suppose the statement holds for all $p' \leq p$ and let $n$ have Zeckendorf representation $[n] = \e_{r} \cdots \e_1$ with $r=p+1$ and $\e_{r} = 1$.

\noindent \textbf{Case 1.} If $\e_1 = 0$, then let $m$ have Zeckendorf representation $[m] = \e_r \cdots \e_2$, given by forgetting the final Zeckendorf digit of $n$. By the inductive hypothesis, all words in $(\vartheta_1^p(b))^{\e_r} \cdots (\vartheta_1(b))^{\e_2}$ appear as proper prefixes of words in $\vartheta_1^{p+1}(a)$ with either $ab$ or $ba$ appended to their end. If appended by $ab$, then by applying the substitution $\vartheta_1$ we see that for all words
\[
u \in (\vartheta_1^{p+1}(b))^{\e_r} \cdots (\vartheta_1^2(b))^{\e_2} = (\vartheta_1^{p+1}(b))^{\e_r} \cdots (\vartheta_1^2(b))^{\e_2}(\vartheta_1(b))^{\e_1},
\]
the word $uaba$ and so also $uab$ appears as a prefix of some word in $\vartheta_1^{p+2}(a)$. If appended by $ba$, then similarly, $uaba$ and so $uab$ appears as a prefix of some word in $\vartheta_1^{p+2}(a)$.

\noindent \textbf{Case 2.} If $\e_1 = 1$, then let $m$ have Zeckendorf representation $[m] = \e_r \cdots \e_2$, given by forgetting the final Zeckendorf digit of $n$. By the inductive hypothesis, all words in $(\vartheta_1^p(b))^{\e_r} \cdots (\vartheta_1(b))^{\e_2}$ appear as proper prefixes of words in $\vartheta_1^{p+1}(a)$ wither $ab$ or $ba$ appended to their end.  If appended by $ab$, then by applying the substitution $\vartheta_1$ we see that for all words
\[
u' \in (\vartheta_1^{p+1}(b))^{\e_r} \cdots (\vartheta_1^2(b))^{\e_2},
\]
the word $u'aba$ appears as a prefix of some word in $\vartheta_1^{p+2}(a)$, and note that $u'aba \in u'\vartheta_1(b)ba$, hence for all $u \in (\vartheta_1^{p+1}(b))^{\e_r} \cdots (\vartheta_1(b))^{\e_1}$, the word $uba$ appears as a prefix of some word in $\vartheta_1^{p+2}(a)$. If appended by $ba$, then similarly, $u'aab$ and so $uab$ appears as a prefix of some word in $\vartheta_1^{p+2}(a)$, as required.
\end{proof}
\begin{theorem}\label{THM:semi-fib}
Let $\vartheta_1$ be the random Fibonacci substitution with associated RS-subshift $X_1$.
The subshift $X_1$ is semi-mixing with respect to the set of words $\mathcal{S}_1 = \{ab,ba\}$.
\end{theorem}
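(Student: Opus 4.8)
The plan is to verify the combinatorial criterion for semi-mixing given in Proposition~\ref{pro:semi-mixing}, using the set $\mathcal{S}_1 = \{ab, ba\}$ of length-$\ell = 2$ words. Concretely, I must show that for every legal word $w \in \mathcal{L}$, there is an $N$ such that for all $n \geq N$ there exists a word $u$ of length $n$ and $s \in \mathcal{S}_1$ with $wus \in \mathcal{L}$. The key device is the Zeckendorf representation observation highlighted before the lemmas: if $[n] = \e_r \cdots \e_1$ and $u \in (\vartheta_1^r(b))^{\e_r} \cdots (\vartheta_1(b))^{\e_1}$, then $|u| = n$, and the effect of applying $\vartheta_1$ or appending an $a$ shifts the Zeckendorf digit string by appending a $0$ or a $1$ respectively. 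This means that by building up words whose lengths have prescribed Zeckendorf representations, I can control word lengths to hit \emph{every} sufficiently large $n$.

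First I would fix an arbitrary legal $w$. Since $w$ is legal, it is a subword of some $\vartheta_1^p(a)$ for a suitable $p$; I would choose $N$ large enough (depending on $|w|$, hence on $p$) so that every $n \geq N$ has a Zeckendorf representation $[n] = \e_r \cdots \e_1$ of length $r \geq p$ with leading digit $\e_r = 1$. The idea is that the ``gap'' word $u$ should be taken from the product set $(\vartheta_1^r(b))^{\e_r} \cdots (\vartheta_1(b))^{\e_1}$, which by the length computation $|\vartheta_1^k(b)| = f_k$ and the Zeckendorf identity $n = \sum_i \e_i f_i$ has total length exactly $n$. What remains is to show that $w$ can legally be followed by such a $u$ and then by some $s \in \{ab, ba\}$, i.e.\ that $w\,u\,s \in \mathcal{L}$.

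This is exactly where the two technical lemmas do the work. Lemma~\ref{LEM:tech2} guarantees that for any $u$ in the relevant product set (with $r = p+1$, $\e_r = 1$), either $uab$ or $uba$ appears as a prefix of some word in $\vartheta_1^{p+2}(a)$; combined with the fact that $w \prec \vartheta_1^p(a)$ sits as a prefix of the preimage inflation word, this lets me realise $w$ immediately preceding the block $us$ inside a single high-level inflation word, so that the concatenation is legal. Lemma~\ref{LEM:tech1} is the inductive engine that lets me pass from a realisation at one Zeckendorf length to the next: applying $\vartheta_1$ to a legal word of the form $\vartheta_1^p(a)\,u\,ab$ or $\vartheta_1^p(a)\,u\,ba$ produces legal words of the form $\vartheta_1^p(a)\,\vartheta_1(u)\,ab$ and $\vartheta_1^p(a)\,\vartheta_1(u)\,\vartheta_1(b)\,ba$, which is precisely how appending a $0$ or a $1$ to the Zeckendorf digit string is realised at the level of words. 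I would carry out the induction on the length $r$ of the Zeckendorf representation, peeling off the least significant digit $\e_1$ at each stage and invoking Lemma~\ref{LEM:tech1} in the $\e_1 = 0$ and $\e_1 = 1$ cases respectively.

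The final step is to confirm that $\mathcal{S}_1 = \{ab, ba\}$ is a \emph{proper} subset of $\mathcal{L}^2$, which holds because $\mathcal{L}^2$ also contains the word $aa$ (e.g.\ from $\vartheta_1^2(a) = \{aba, baa, aab\}$), so that $U = \mathcal{Z}_{ab} \cup \mathcal{Z}_{ba}$ is a proper clopen set. I expect the main obstacle to be the bookkeeping that links the abstract Zeckendorf-digit manipulation to the concrete statement ``$wus \in \mathcal{L}$'': one must be careful that appending the word $a$ (which toggles the final digit from $0$ to $1$) and applying $\vartheta_1$ (which appends a $0$) genuinely commute with the subword/legality relation in the way the digit calculus suggests, and that the leading-digit condition $\e_r = 1$ needed by Lemma~\ref{LEM:tech2} is maintained throughout. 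Handling the case distinction on $\e_1$ cleanly, and ensuring the base of the induction lines up with the prefix containing $w$, is the delicate part; once that is in place, the length count and the properness of $\mathcal{S}_1$ are immediate.
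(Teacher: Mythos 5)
Your strategy is the same as the paper's --- the criterion of Proposition~\ref{pro:semi-mixing} with $\mathcal{S}_1=\{ab,ba\}$, gap words drawn from the product sets $(\vartheta_1^r(b))^{\e_r}\cdots(\vartheta_1(b))^{\e_1}$ so that Zeckendorf digits control lengths, Lemma~\ref{LEM:tech2} for the base case and Lemma~\ref{LEM:tech1} for the digit-appending induction --- but there is a genuine gap at the one step you yourself flag as ``delicate'' and then do not resolve: how the arbitrary legal word $w$ gets attached to the gap word. You assert that $w$ can be ``realised immediately preceding the block $us$ inside a single high-level inflation word'', so that $wus\in\mathcal{L}$ with $|u|=n$ exactly. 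That would require $w$ to occur as a \emph{suffix} of some inflation word, and this fails in general: $w=bb$ is legal (it occurs in $abbaa\in\vartheta_1^3(a)$), yet no word in any $\vartheta_1^k(a)$ or $\vartheta_1^k(b)$ ends in $bb$, because every level-$k$ inflation word with $k\ge 1$ is a concatenation of the blocks $ab$, $ba$ and $a$, and hence ends either in the letter $a$ or in the two letters $ab$. What the lemmas actually deliver is weaker: writing $w\prec w'=xwy\in\vartheta_1^{p+1}(a)$, Lemma~\ref{LEM:tech2} makes $us$ a prefix of some word of $\vartheta_1^{p+1}(a)$, so every word of $\vartheta_1^{p+1}(a)\,us$ is a prefix of a realisation of $\vartheta_1^{p+1}(aa)$ and hence legal; in particular $xwy\,us$ is legal. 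This establishes legality of $w\,(yu)\,s$, not of $w\,u\,s$.

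The repair is exactly the device in the paper's proof: absorb the tail $y$ into the gap word, setting $\hat u\coloneqq yu$ and $N\coloneqq |y|+n_0$, so that the achievable gap lengths are $|y|$ plus the lengths realised by the Zeckendorf construction; since the latter include every integer whose representation has leading digit in position at least $p$ (hence every integer $\ge f_p$), every $n\ge N$ is attained, and the semi-mixing criterion is indifferent to the constant shift $|y|$. Without this shift, your central claim --- ``for every $n\ge N$ there is $u$ of length exactly $n$ in the product set with $wus\in\mathcal{L}$'' --- is strictly stronger than what Lemmas~\ref{LEM:tech1} and~\ref{LEM:tech2} prove, and the mechanism you propose for it breaks on words such as $w=bb$. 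The remainder of your outline (peeling off least-significant digits in the induction, and the properness check $aa\in\mathcal{L}^2\setminus\mathcal{S}_1$) is sound and matches the paper once the $y$-shift is put in place.
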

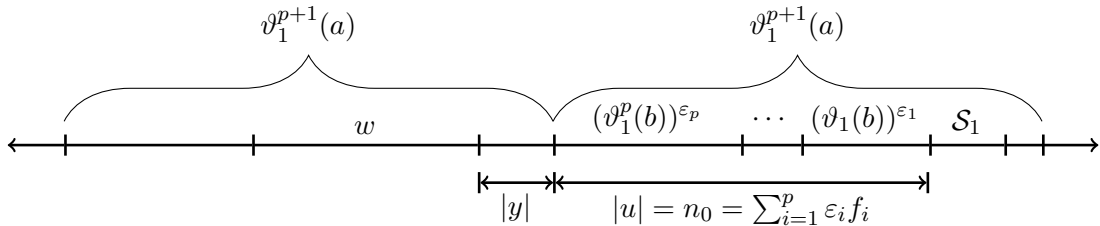
\begin{figure}[h]
            \center
            \begin{tikzpicture}
            \draw[<-,line width= 1.0pt](-1.75,0)--node[above]{}(-1,0);
            \draw[|-,line width= 1.0pt](-1,0)--node[above]{}(1.5,0);
            \draw[|-,line width= 1.0pt](1.5,0)--node[above]{$w$}(4.5,0);
            \draw[|-,line width= 1.0pt](4.5,0)--node[above]{}(5.5,0);
            \draw[|-,line width= 1.0pt](5.5,0)--node[above]{$(\vartheta_1^p(b))^{\e_p}$}(8,0);
            \draw[|-,line width= 1.0pt](8,0)--node[above=3pt]{$\cdots$}(8.8,0);
            \draw[|-,line width= 1.0pt](8.8,0)--node[above]{$(\vartheta_1(b))^{\e_1}$}(10.5, 0);
            \draw[|-,line width= 1.0pt](10.5,0)--node[above]{$\mathcal{S}_1$}(11.5, 0);
            \draw[|-,line width= 1.0pt](11.5,0)--node[above]{}(12,0);
            \draw[|->,line width= 1.0pt](12,0)--node[above]{}(12.75, 0);
            
            \draw[|<->,line width= 1.0pt](4.5,-0.5)--node[below]{$|y|$}(5.5, -0.5);
            \draw[|<->|,line width= 1.0pt](5.5,-0.5)--node[below]{$|u| = n_0 = \sum_{i=1}^p \e_i f_i$}(10.5, -0.5);

            \draw[decoration={brace,amplitude=25pt,raise=9pt},decorate] (-1,0) -- node[above=35pt] {$\vartheta_1^{p+1}(a)$} (5.5,0);
            \draw[decoration={brace,amplitude=25pt,raise=9pt},decorate] (5.5,0) -- node[above=35pt] {$\vartheta_1^{p+1}(a)$} (12,0);
            \end{tikzpicture}
         \caption{\small{Illustration of the construction in the proof of Theorem \ref{THM:semi-fib}.}}
         \label{FIG:main-proof}
\end{figure}
\begin{proof}
Figure \ref{FIG:main-proof} illustrates the construction used in the proof.
Let $w \in \mathcal{L}$ be a legal word. There then exists a power $p \geq 1$ such that $w \prec w' \in \vartheta_1^{p+1}(a)$. Write $w'=xwy$. Let $n_0$ be a natural number whose Zeckendorf representation $[n_0] = \e_r \cdots \e_1$ has $r=p$ and $\e_r = 1$. Pick a word $u \in  (\vartheta_1^{p}(b))^{\e_r} \cdots (\vartheta_1(b))^{\e_1}$ whose length is therefore $n_0$. By Lemma \ref{LEM:tech2}, either $uab$ or $uba$ appears as a prefix of some word in $\vartheta_1^{p+1}(a)$. It follows that either $\vartheta_1^{p+1}(a)uab$ or $\vartheta_1^{p+1}(a)uba$ are subwords of elements of $\vartheta_1^{p+1}(aa)$ and so are legal. So, either $wyuab$ or $wyuba$ is legal. Hence, there exists a word $\hat{u} \coloneqq yu$ of length $N \coloneqq |y|+n_0$ such that either $w \hat{u} ab$ or $w \hat{u} ba$ is legal.

We now proceed by induction.
By Lemma \ref{LEM:tech1}, both $\vartheta_1^{p+1}(a)\vartheta_1(u)ab$ and $\vartheta_1^{p+1}(a)\vartheta_1(u)\vartheta_1(b)ba$ contain legal words and so there exist words $\hat{u}_0$ and $\hat{u}_1$ of lengths $|y| + \e_r \cdots \e_1 0$ and $|y| + \e_r \cdots \e_1 1$ respectively such that both $w\hat{u_0}ab$ and $w\hat{u}_1ba$ are legal. Iterating the procedure, using Lemma \ref{LEM:tech1} together with Zeckendorf's theorem shows that, for all $n \geq N$, there exists a word $\tilde{u}$ of length $n$ such that either $w\tilde{u}ab$ or $w \tilde{u}ba$ is a legal word. It follows that $X_1$ is semi-mixing with respect to $\mathcal{S}_1 = \{ab,ba\}$.
\end{proof}
\begin{remark}
Note that we did not need to use the fact that a Zeckendorf representation is unique. In fact, we could have used the weaker property that the Fibonacci sequence forms a \emph{complete} sequence. That is, for every $n \geq 1$, n can be written as a sum of elements from the sequence, using each element at most once. Brown provided a simple criterion for determining when a sequence is complete \cite{Brown}. This may be useful for extending the above method to other examples where a full Zeckendorf theorem may no longer hold, but the sequence of lengths of level-$n$ inflation words still forms a complete sequence. 
\end{remark}

\subsection{Random tribonacci}\label{SEC:trib}
A similar method to that used for random Fibonacci can also be used to show that the random tribonacci substitution subshift is semi-mixing.
Recall that the random tribonacci substitution is defined by the rule $\tau\colon a\mapsto \{ab, ba\}, b\mapsto \{ac,ca\}, c\mapsto \{a\}$ and its level-$n$ inflation words of type $c$ satisfy $|\tau^n(c)| = t_{n+1}$, where $t_n$ is the $n$-th tribonacci number.

Similar to the case of random Fibonacci, using Zeckendorf representations of natural numbers in terms of tribonacci numbers $n = \sum_{i=2}^r \e_i t_i$, we show that we can find a $\tau$-legal word in $\tau^{r}(a)u$ where $u$ is of the form 
\[
u \in \left(\tau^{r-1}\left( c \right)\right)^{\e_r} \cdots \left(\tau\left( c \right)\right)^{\e_2}
\]
and this word can always be followed by one of the words in $\mathcal{S}_\tau = \{ab,ba,ac,ca\}$.
We then take advantage of the fact that $u$ has length $n$ with $[n] = \e_r \cdots \e_2$ being the Zeckendorf representation of $n$ as a sum of tribonacci numbers.

\begin{lemma}\label{LEM:tech1_trib}
Let $\mathcal{S}_\tau = \{ab,ba,ac,ca\}$.
If, for some word $u$, there is a legal word in 
$\tau^p(a)u\mathcal{S}_\tau$
then at least one of the words in $\tau^p(a)\tau(u)\mathcal{S}_\tau$ is legal and at least one of the words in $\tau^p(a)\tau(u)\tau(c)\mathcal{S}_\tau$ is legal.
\end{lemma}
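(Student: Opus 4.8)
The plan is to mirror the structure of Lemma \ref{LEM:tech1}, exploiting the fact that applying $\tau$ to a legal word produces a legal word, together with the crucial ``padding'' inclusion that lets us reattach an initial $\tau^p(a)$ block. First I would observe that $\mathcal{S}_\tau = \{ab,ba,ac,ca\}$ is exactly the set $\tau(a)\cup\tau(b)$, so that any word in $\mathcal{S}_\tau$ is a realisation of either $\tau(a)$ or $\tau(b)$. This is the structural reason the lemma should hold: suffixes drawn from $\mathcal{S}_\tau$ behave compatibly under the substitution. The key observation from the Fibonacci case generalises as the inclusion $\tau^p(c)\,\tau^p(a) \subset \tau^{p+1}(a)$, which follows because $\tau(a) \ni ba$ and hence $\tau^{p+1}(a) \supseteq \tau^p(b)\tau^p(a) \supseteq \tau^p(c)\tau^p(a)$ using $\tau(b)\ni ca \supseteq$ a $c$-block followed by an $a$; I would isolate the precise chain of inclusions needed at the start.

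The main argument proceeds by assuming a legal word exists in $\tau^p(a)\,u\,s$ for some $s \in \mathcal{S}_\tau$. Applying $\tau$ yields a legal word in $\tau^{p+1}(a)\,\tau(u)\,\tau(s)$. Since $s \in \tau(a)\cup\tau(b)$, the image $\tau(s)$ is contained in $\tau^2(a)$ or $\tau^2(b)$, and in particular $\tau(s)$ begins with a word whose own prefix lies in $\mathcal{S}_\tau$ after peeling off the leading inflation-word structure; this is what lets me extract the desired $\mathcal{S}_\tau$-suffix. I would then use the padding inclusion $\tau^p(c)\,\tau^{p+1}(a) \subset \tau^{p+1}(a)\cdots$ — more precisely the analogue of the displayed containment in Lemma \ref{LEM:tech1} — to replace the leading $\tau^{p+1}(a)$ by $\tau^p(a)$ (absorbing the extra $\tau^p(c)$ or $\tau^p(b)$ factor), thereby recovering a legal word of the form $\tau^p(a)\,\tau(u)\,s'$ with $s' \in \mathcal{S}_\tau$. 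For the second conclusion, inserting an extra $\tau(c)$ factor corresponds to incrementing the relevant Zeckendorf digit, and the same absorption trick shows $\tau^p(a)\,\tau(u)\,\tau(c)\,s''$ contains a legal word for suitable $s'' \in \mathcal{S}_\tau$.

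I would organise the proof as a short case analysis on which element of $\mathcal{S}_\tau$ the hypothesised legal word ends with; by the symmetry of $\tau$ (each of $a,b$ maps to a symmetric pair) the four cases collapse into essentially two, namely whether the final suffix came from $\tau(a)$ or $\tau(b)$. In each case I track how the final two or three letters transform under one application of $\tau$ and verify that a word in $\mathcal{S}_\tau$ reappears at the tail, and separately that inserting a $\tau(c)$-block keeps things legal. I expect the main obstacle to be bookkeeping at the boundary between $\tau(u)$ and the suffix: I must ensure that after substitution the new legal word really does factor as \emph{(inflation word of type $a$)(image of $u$)(something in $\mathcal{S}_\tau$)} rather than having the $\mathcal{S}_\tau$-part swallowed into $\tau(u)$ or split across the boundary. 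Carefully choosing the realisation of $\tau$ on the last letter of $u$ and on the suffix $s$ — which is legitimate because $\tau(s)$ is a \emph{set} and we only need \emph{one} legal realisation — is precisely what resolves this boundary issue, exactly as the freedom to pick realisations was used in the Fibonacci lemma.
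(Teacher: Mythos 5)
Your overall strategy is the paper's: apply $\tau$ to the hypothesised legal word, choose realisations of $\tau(s)$ carefully, strip off a leading block to pass from $\tau^{p+1}(a)$ back to $\tau^p(a)$, and read off prefixes lying in $\mathcal{S}_\tau$ and in $\tau(c)\mathcal{S}_\tau$. But the inclusion you single out as ``the key observation'', namely $\tau^p(c)\tau^p(a) \subset \tau^{p+1}(a)$, is false, and so is the chain you offer in support of it. The step $\tau^p(b)\tau^p(a) \supseteq \tau^p(c)\tau^p(a)$ cannot hold because $\tau^p(b) \not\supseteq \tau^p(c)$: words in $\tau^p(b)$ have length $t_p+t_{p+1}$ while words in $\tau^p(c)$ have length $t_{p+1}$. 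A direct length count also kills the conclusion: words in $\tau^p(c)\tau^p(a)$ have length $t_{p+1}+t_{p+2}$, whereas words in $\tau^{p+1}(a)$ have length $t_{p+3}=t_{p+2}+t_{p+1}+t_p$, and these differ for all $p\geq 1$ (for $p=0$ one checks $ca\notin\tau(a)$ directly). The inclusion that actually does the work --- and is the true analogue of the display in Lemma \ref{LEM:tech1} --- is $\tau^p(b)\tau^p(a) \subset \tau^{p+1}(a)$, which holds because $ba \in \tau(a)$; the set $\tau^p(c)\tau^p(a)$ instead sits inside $\tau^{p+1}(b)$ (because $ca \in \tau(b)$), which is of no use for reattaching a leading $\tau^p(a)$-block. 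Since your ``absorption trick'' is built on the false inclusion (and is stated in two mutually inconsistent forms), the proof as proposed would fail; the repair is simply to use $\tau^p(b)\tau^p(a)\subset\tau^{p+1}(a)$, exactly as the paper does.

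A second, lesser gap: you repeatedly defer the one computation that makes \emph{both} conclusions of the lemma come out simultaneously. For each $s \in \mathcal{S}_\tau$ one must exhibit a \emph{single} realisation of $\tau(s)$ having both a prefix in $\mathcal{S}_\tau$ (first conclusion) and a prefix of the form $as' = \tau(c)s'$ with $s'\in\mathcal{S}_\tau$ (second conclusion). This is not automatic and not every realisation works: $baca \in \tau(ab)$ has prefix $ba \in \mathcal{S}_\tau$ but no prefix of the form $as'$. The paper records the good choices explicitly in its table ($\dagger$): $ab \mapsto abac$, $ba \mapsto acab$, $ac\mapsto aba$, $ca \mapsto aba$. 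Your observation that $\mathcal{S}_\tau = \tau(a)\cup\tau(b)$ is correct and is indeed the structural reason such choices exist, but it does not by itself discharge this verification. Finally, choosing the realisation of $\tau$ on the last letter of $u$ is unnecessary: the conclusion quantifies over the whole set $\tau(u)$, so only the realisations of the leading inflation word and of the suffix $s$ need to be controlled.
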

\begin{proof}
The proof follows in exactly the same fashion as for Lemma \ref{LEM:tech1}, and so we only prove a single instance, when a word in $\tau^p(a)uab$ is legal. The others follow analogously by choosing suitable realisations of $\tau(s)$ for $s \in \mathcal{S}_\tau$.
Suppose that some word in $\tau^p(a)uab$ is legal.
As $\tau^p(a)uab$ contains a legal word, then by applying $\tau$ we see that $\tau^{p+1}(a)\tau(u)abac$ contains a legal word.
Note that
\[
\tau^p(b)\tau^{p}(a)\tau(u)abac \subset \tau^{p+1}(a)\tau(u)abac
\]
and so $\tau^{p}(a)\tau(u)aba$ contains a legal word.
Hence, both
\[
\tau^{p}(a)\tau(u)ab \subset \tau^p(a)\tau(u)\mathcal{S}_\tau\quad \text{ and }\quad \tau^{p}(a)\tau(u)\tau(c)ba\subset \tau^p(a)\tau(u)\tau(c)\mathcal{S}_\tau
\]
contain legal words.

For the other elements $s \in \mathcal{S}_\tau$, it is helpful to choose the following realisations from $\tau(s)$:
\begin{equation}\tag{$\dagger$}
\begin{array}{rl}
ab & \mapsto abac,\\
ba & \mapsto acab,\\
ac & \mapsto aba,\\
ca & \mapsto aba,
\end{array}
\end{equation}
since each of these realisations both have prefixes which are elements of $\mathcal{S}_\tau$ and also prefixes of the form $as' = \tau(c)s'$ for some $s' \in \mathcal{S}_\tau$.
\end{proof}
\begin{lemma}\label{LEM:tech2_trib}
Let $p \geq 2$ be given and let $n\geq 1$ be a natural number whose Zeckendorf representation $[n] = \e_r \cdots \e_2$ has $r=p$ and $\e_r = 1$. Then for all words $u \in (\tau^{p-1}(c))^{\e_p} \cdots (\tau(c))^{\e_2}$, one of the words in $u\mathcal{S}_\tau$ appears as a prefix of some word in $\tau^{p}(a)$.
\end{lemma}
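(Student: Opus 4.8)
The plan is to run exactly the inductive argument used for Lemma~\ref{LEM:tech2}, replacing the two-word set $\{ab,ba\}$ by $\mathcal{S}_\tau$ and the Fibonacci lengths by the tribonacci lengths $|\tau^{i-1}(c)| = t_i$. The induction is on $p \geq 2$, and its engine is the observation that applying $\tau$ to a block raises its level by one, so that $\tau\bigl((\tau^{p-1}(c))^{\e_{p+1}} \cdots (\tau(c))^{\e_3}\bigr) = (\tau^p(c))^{\e_{p+1}} \cdots (\tau^2(c))^{\e_3}$. This is what lets me realise the ``high'' blocks of a level-$(p+1)$ word $u$ as $\tau(u')$ for a level-$p$ word $u'$ to which the inductive hypothesis applies. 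The base case $p=2$ I would dispatch by a direct check: here $\e_2 = 1$ forces $u = a \in \tau(c)$, and $uba = aba$ is a prefix of $abac \in \tau^2(a)$.

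For the inductive step I assume the statement for all $2 \le p' \le p$ and take $[n] = \e_{p+1}\cdots\e_2$ with $\e_{p+1}=1$. Mimicking the Fibonacci proof, I forget the least significant digit $\e_2$ and relabel, setting $\e'_i \coloneqq \e_{i+1}$ for $2 \le i \le p$; the resulting string $\e'_p \cdots \e'_2$ is a Zeckendorf representation of leading index $p$ with $\e'_p = \e_{p+1} = 1$, so the inductive hypothesis supplies, for every $u' \in (\tau^{p-1}(c))^{\e_{p+1}}\cdots(\tau(c))^{\e_3}$, some $s \in \mathcal{S}_\tau$ with $u's \prec W$ for some $W \in \tau^p(a)$. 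Given an arbitrary level-$(p+1)$ word $u$, I split off its trailing block $(\tau(c))^{\e_2}$, write the remaining high blocks as a realisation $\tau(u')$, and apply $\tau$ to $u's \prec W$. Since $W = u's w'$ gives $\tau(W) = \tau(u')\tau(s)\tau(w') \subseteq \tau^{p+1}(a)$, I may choose the realisation of $\tau(u')$ to be precisely the one occurring in $u$ and conclude that $\tau(u')\,\tau(s)$ is a prefix of a word in $\tau^{p+1}(a)$.

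The crucial step, and the one I expect to be the main obstacle, is to peel a prefix of the correct shape off $\tau(s)$, and this is exactly what the realisation table $(\dagger)$ from Lemma~\ref{LEM:tech1_trib} is built for: each realisation listed there simultaneously begins with a word of $\mathcal{S}_\tau$ \emph{and} with a word of the form $a s'$ for some $s' \in \mathcal{S}_\tau$. When $\e_2 = 0$ we have $u = \tau(u')$, and the first property yields $u s'' \prec W'$ for some $s'' \in \mathcal{S}_\tau$ and $W' \in \tau^{p+1}(a)$; when $\e_2 = 1$ the trailing block $(\tau(c))^{1} = \{a\}$ forces $u = \tau(u')\,a$, and the second property absorbs this extra $a$ to give $u s' \prec W'$. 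Either way one of the words in $u\mathcal{S}_\tau$ is a prefix of a word in $\tau^{p+1}(a)$, closing the induction. I would note, as in the remark following Theorem~\ref{THM:semi-fib}, that the no-three-consecutive-ones condition of the tribonacci Zeckendorf theorem is never actually invoked here — only $\e_i \in \{0,1\}$ matters — and that the sole genuinely new bookkeeping relative to Lemma~\ref{LEM:tech2} is tracking the four continuations in $\mathcal{S}_\tau$ through the table $(\dagger)$, whose dual-prefix property is what makes both the $\e_2=0$ and $\e_2=1$ cases go through uniformly.
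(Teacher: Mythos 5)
Your proof is correct and takes exactly the route the paper intends: the paper's own proof of this lemma is merely a pointer to the induction of Lemma \ref{LEM:tech2} together with the realisation table $(\dagger)$, and your argument fills in precisely those details --- the shifted-digit induction hypothesis, the splitting off of the trailing $(\tau(c))^{\e_2}$ block via $\tau\bigl((\tau^{p-1}(c))^{\e_{p+1}}\cdots(\tau(c))^{\e_3}\bigr) = (\tau^{p}(c))^{\e_{p+1}}\cdots(\tau^{2}(c))^{\e_3}$, and the dual-prefix property of $(\dagger)$ handling the cases $\e_2 = 0$ and $\e_2 = 1$. Your base case and the observation that truncating a digit preserves validity of the tribonacci Zeckendorf representation are both verified correctly, so there are no gaps.
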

\begin{proof}
The proof follows in exactly the same fashion as for Lemma \ref{LEM:tech2}. Therefore, we invite the reader to fill in the details. Again, it is helpful to choose the same realisations from $\tau(s)$ for each element $s \in\mathcal{S}_\tau$ during the induction step according to ($\dagger$).
\end{proof}
\begin{theorem}\label{THM:semi-trib}
Let $\tau$ be the random tribonacci substitution with associated RS-subshift $X_\tau$.
The subshift $X_\tau$ is semi-mixing with respect to the set of words $\mathcal{S}_\tau = \{ab,ba,ac,ca\}$.
\end{theorem}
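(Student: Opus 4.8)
The plan is to follow the proof of Theorem~\ref{THM:semi-fib} essentially verbatim, transporting every ingredient to the tribonacci setting: the letter $b$ of the Fibonacci argument is replaced by $c$ (note $\tau(c) = \{a\} = \vartheta_1(b)$, with $|\tau(c)| = t_2 = 1$), the inflation words $\vartheta_1^n(b)$ of length $f_n$ are replaced by $\tau^n(c)$ of length $t_{n+1}$, the set $\{ab,ba\}$ is replaced by $\mathcal{S}_\tau = \{ab,ba,ac,ca\}$, and the Fibonacci Zeckendorf data by the tribonacci Zeckendorf data. The governing observation is that a word $u \in (\tau^{p-1}(c))^{\e_p}\cdots(\tau(c))^{\e_2}$ has length exactly $\sum_{i=2}^p \e_i t_i$, so $[\,|u|\,] = \e_p\cdots\e_2$; applying $\tau$ shifts this representation by appending a $0$ (each level is raised by one), while additionally concatenating one copy of $\tau(c)$ appends a $1$ in the least significant slot. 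This is the branching that drives the induction.

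For the base case I would take a legal word $w \in \mathcal{L}$ and choose $p \geq 2$ with $w \prec w' \in \tau^p(a)$, writing $w' = xwy$. By Lemma~\ref{LEM:tech2_trib}, for every integer $n_0$ whose tribonacci Zeckendorf representation $[n_0] = \e_p\cdots\e_2$ has top index $p$, there is a word $u$ of length $n_0$ for which some word of $u\mathcal{S}_\tau$ is a prefix of a word in $\tau^p(a)$. Since $aa$ is $\tau$-legal (it occurs in $baac \in \tau^2(a)$), the set $\tau^p(a)\,u\,\mathcal{S}_\tau$ then contains legal words as subwords of realisations in $\tau^p(aa)$, so $wyus$ is legal for some $s \in \mathcal{S}_\tau$. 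Setting $\hat u := yu$ with minimal choice $n_0 = t_p$ gives $w\hat u s \in \mathcal{L}$ with $N := |\hat u| = |y| + t_p$, and the same reasoning realises every gap length in $[\,|y|+t_p,\; |y|+t_{p+1}-1\,]$, since every integer in $[t_p, t_{p+1}-1]$ has a tribonacci Zeckendorf representation with top index exactly $p$. This is the tribonacci analogue of the construction in Figure~\ref{FIG:main-proof}.

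The induction then extends this to all larger lengths via Lemma~\ref{LEM:tech1_trib}, proceeding on the number of Zeckendorf digits. Given a target $n \geq N$, I would take the tribonacci Zeckendorf representation of $n - |y|$, strip its least significant digit to obtain a shorter admissible representation realised (by the inductive hypothesis, whose base is the range above) by some word $u$ with $\tau^p(a)u\mathcal{S}_\tau$ containing a legal word, and then apply Lemma~\ref{LEM:tech1_trib} once: it produces legal words in both $\tau^p(a)\tau(u)\mathcal{S}_\tau$ (appending $0$) and $\tau^p(a)\tau(u)\tau(c)\mathcal{S}_\tau$ (appending $1$), and I select the branch matching the stripped digit. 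Crucially the level $p$, and hence the suffix $y$, is held fixed throughout — Lemma~\ref{LEM:tech1_trib} keeps the $\tau^p(a)$ block and only inflates the central word — so each step merely lengthens $u$; the length bookkeeping gives $|\tilde u| = |y| + (n-|y|) = n$ at the end. By Proposition~\ref{pro:semi-mixing} this shows $X_\tau$ is semi-mixing with respect to $\mathcal{S}_\tau$.

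The genuinely new point, compared with random Fibonacci, is keeping the induction self-sustaining across the enlarged set $\mathcal{S}_\tau$: after each application of $\tau$ the resulting legal word must again be legally followable by some element of $\mathcal{S}_\tau$, which is exactly what forces the prescribed realisations $(\dagger)$ in Lemma~\ref{LEM:tech1_trib}, whose images carry prefixes both in $\mathcal{S}_\tau$ and of the form $\tau(c)s'$. Since that bookkeeping is already absorbed into the two lemmas, the only thing I must check at the level of the theorem is the covering claim — that following the tribonacci Zeckendorf representation of each target really reaches every $n \geq N$. Here the third-order recurrence and the weaker constraint $\e_i\e_{i+1}\e_{i+2} = 0$ cause no trouble, since I only ever follow a genuine (hence admissible) tribonacci Zeckendorf representation of the target; as in the remark after Theorem~\ref{THM:semi-fib} I do not need uniqueness but only completeness of the tribonacci sequence, which holds because $t_2 = 1$ and $t_{n+1} \leq 1 + \sum_{i=2}^{n} t_i$ for all $n$.
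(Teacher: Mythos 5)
Your proposal is correct and takes essentially the same approach as the paper: the same base case (Lemma \ref{LEM:tech2_trib} plus legality of $aa$ inside $\tau^p(aa)$, giving all gap lengths with top Zeckendorf index $p$) and the same digit-appending iteration via Lemma \ref{LEM:tech1_trib} with the realisations $(\dagger)$. The only difference is presentational: you spell out the covering step (stripping digits of the target's tribonacci Zeckendorf representation and matching branches of Lemma \ref{LEM:tech1_trib}) that the paper compresses into ``iterating the procedure,'' which is a clarification of the same argument rather than a different route.
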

\begin{proof}
Let $w \in \mathcal{L}$ be a legal word. There then exists a power $p \geq 2$ such that $w \prec w' \in \tau^{p}(a)$. Write $w'=xwy$. Let $n$ be a natural number whose tribonacci Zeckendorf representation $[n] = \e_r \cdots \e_2$ has $r=p$ and $\e_r = 1$. Pick a word $u \in  (\tau^{p-1}(c))^{\e_r} \cdots (\tau(c))^{\e_2}$ whose length is therefore $n$. By Lemma \ref{LEM:tech2_trib}, one element of $u\mathcal{S}_\tau$ appears as a prefix of some word in $\tau^{p}(a)$. It follows that one word in $\tau^{p}(a)u\mathcal{S}_\tau$ is a subword of some element of $\tau^{p}(aa)$ and so is legal. So, some word in $wyu\mathcal{S}_\tau$ is legal. Hence, there exists a word $\hat{u} \coloneqq yu$ of length $N \coloneqq |y|+n$ such that a word in $w \hat{u} \mathcal{S}_\tau$ is legal.

We now proceed by induction.
By Lemma \ref{LEM:tech1_trib}, there are legal words in both $\tau^{p}(a)\tau(u)\mathcal{S}_\tau$ and $\tau^{p}(a)\tau(u)\tau(c)\mathcal{S}_\tau$ and and so there exist words $\hat{u}_0$ and $\hat{u}_1$ of lengths $|y| + \e_r \cdots \e_2 0$ and $|y| + \e_r \cdots \e_2 1$ respectively such that both $w\hat{u_0}\mathcal{S}_\tau$ and $w\hat{u}_1\mathcal{S}_\tau$ contain legal words. Iterating the procedure, using Lemma \ref{LEM:tech1_trib} together with Zeckendorf's theorem for tribonacci numbers shows that, for all $n \geq N$, there exists a word $\tilde{u}$ of length $n$ such that some word in $w\tilde{u}\mathcal{S}_\tau$ is legal. It follows that $X_\tau$ is semi-mixing with respect to $\mathcal{S}_\tau = \{ab,ba,ac,ca\}$.
\end{proof}
Given the above, it is not hard to see how to generalise Theorem \ref{THM:semi-trib} to all random $k$-bonacci substitutions, random substitutions on the alphabet $\{a_1,\ldots, a_k\}$ given by
\[
\tau_k \colon \begin{cases}
a_i \mapsto \{a_1a_{i+1}, a_{i+1}a_1\},& 1 \leq i \leq k-1,\\
a_k \mapsto \{a_1\}, &
\end{cases}
\]
with $k=2$ corresponding to Fibonacci and $k=3$ corresponding to tribonacci. For each $k$, there is a corresponding Zeckendorf theorem, as described by Carlitz et al.~\cite{FibHigher}. Hence, we have the following corollary.
\begin{corollary}
Let $\tau_k$ be the random $k$-bonacci substitution with associated RS-subshift $X_{\tau_k}$. The subshift $X_{\tau_k}$ is semi-mixing with respect to the set of words $\mathcal{S}_{\tau_k} = \{a_1 a_i, a_i a_1 \mid 1 \leq i \leq k\}$.
\end{corollary}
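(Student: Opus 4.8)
The plan is to transcribe the proof of Theorem~\ref{THM:semi-trib} line by line, replacing each tribonacci-specific object by its $k$-bonacci counterpart. Throughout, the letter $a_k$ plays the role that $c$ (respectively $b$) played for tribonacci (respectively Fibonacci): it is the unique letter with $\tau_k(a_k) = \{a_1\}$, and it is the inflation words $\tau_k^j(a_k)$ whose lengths furnish the numeration scheme, since $\mathcal{S}_{\tau_k}$ and the admissibility constraints are governed by this letter exactly as before.

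First I would record the length bookkeeping. Writing $L_i(n) := |\tau_k^n(a_i)|$, the relations $L_i(n) = L_1(n-1) + L_{i+1}(n-1)$ for $i < k$ together with $L_k(n) = L_1(n-1)$ telescope to show that $L_1$ satisfies the $k$-bonacci recurrence $L_1(n) = \sum_{i=1}^k L_1(n-i)$. In particular $|\tau_k^j(a_k)| = L_1(j-1)$ is, up to a fixed shift of index, the $k$-bonacci sequence $s_i$ appearing in the Zeckendorf theorem of Carlitz et al.~\cite{FibHigher} (for $k=3$ this recovers $|\tau^j(c)| = t_{j+1}$). Consequently, for a word $u \in (\tau_k^{p-1}(a_k))^{\e_p} \cdots (\tau_k(a_k))^{\e_2}$, the length $|u|$ equals $\sum_i \e_i s_i$, so that its Zeckendorf digit string is exactly $\e_p \cdots \e_2$; applying $\tau_k$ appends a trailing $0$, while post-concatenating one further copy of $\tau_k(a_k) = \{a_1\}$ appends a trailing $1$. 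This is the exact analogue of the observation opening Section~\ref{SEC:fib}.

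Next I would prove the $k$-bonacci versions of Lemmas~\ref{LEM:tech1_trib} and~\ref{LEM:tech2_trib}. The single algebraic input driving the absorption step is the inclusion $\tau_k^p(a_2)\tau_k^p(a_1) \subset \tau_k^{p+1}(a_1)$, valid because $a_2 a_1 \in \tau_k(a_1)$; this is what lets one strip the left prefix $\tau_k^p(a_2)$ after applying $\tau_k$, exactly as $\tau^p(b)\tau^p(a) \subset \tau^{p+1}(a)$ was used for tribonacci. The combinatorial heart is the uniform analogue of the special realisations $(\dagger)$: for every $s \in \mathcal{S}_{\tau_k}$ one must exhibit a realisation in $\tau_k(s)$ that simultaneously begins with an element of $\mathcal{S}_{\tau_k}$ and begins with a word of the form $a_1 s' = \tau_k(a_k) s'$ for some $s' \in \mathcal{S}_{\tau_k}$. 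For $s = a_1 a_i$ one takes the realisation beginning $a_1 a_2 a_1 \cdots$, and for $s = a_i a_1$ the realisation beginning $a_1 a_{i+1} a_1 \cdots$ (taking $j=2$, i.e. $a_1 a_2 a_1 \cdots$, when $i = k$, since $a_{i+1}$ is undefined); in each case the prefix $a_1 a_j$ lies in $\mathcal{S}_{\tau_k}$ and $a_1 \cdot (a_j a_1)$ exhibits the required $\tau_k(a_k)$-prefix. With these realisations in hand the proofs carry over unchanged: the tech1-analogue produces legal words in both $\tau_k^p(a_1)\tau_k(u)\mathcal{S}_{\tau_k}$ (append $0$) and $\tau_k^p(a_1)\tau_k(u)\tau_k(a_k)\mathcal{S}_{\tau_k}$ (append $1$), while the tech2-analogue inducts on $p$ verbatim, now respecting the $k$-bonacci digit constraint forbidding $k$ consecutive nonzero digits.

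Finally I would assemble the main argument as in Theorem~\ref{THM:semi-trib}. Given a legal word $w$, choose $p$ with $w \prec w' \in \tau_k^p(a_1)$ and write $w' = xwy$; pick $n_0$ with a length-$p$ $k$-bonacci Zeckendorf representation of leading digit $1$ and a matching $u$, apply the tech2-analogue to get a legal word in $\tau_k^p(a_1)u\mathcal{S}_{\tau_k}$, hence in $wyu\mathcal{S}_{\tau_k}$, and set $\hat u = yu$, $N = |y| + n_0$. Iterating the tech1-analogue together with the $k$-bonacci Zeckendorf theorem then yields, for every $n \geq N$, a word $\tilde u$ of length $n$ with $w\tilde u\mathcal{S}_{\tau_k}$ legal, which is exactly the criterion of Proposition~\ref{pro:semi-mixing}. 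It remains to check $\mathcal{S}_{\tau_k} \subsetneq \mathcal{L}^2$: every word of $\mathcal{S}_{\tau_k}$ contains the letter $a_1$, whereas $a_2 a_3$ is legal for $k \geq 3$ (it is a subword of $a_1 a_2 a_3 a_1 \in \tau_k^2(a_1)$) and $a_2 a_2$ serves the same purpose for $k = 2$, so $\mathcal{L}^2 \setminus \mathcal{S}_{\tau_k} \neq \varnothing$. I expect the only genuinely $k$-dependent verification, and hence the main obstacle, to be the existence of the uniform $(\dagger)$-realisations above together with confirming their compatibility with the weaker digit constraint of the $k$-bonacci numeration; every remaining step is a formal transcription of the tribonacci proof.
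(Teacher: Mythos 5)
Your proposal is correct and takes exactly the approach the paper intends: the paper gives no separate proof of this corollary, merely asserting that Theorem~\ref{THM:semi-trib} generalises via the $k$-bonacci Zeckendorf theorem of Carlitz et al., and your transcription---with $a_k$ playing the role of $c$, the inclusion $\tau_k^p(a_2)\tau_k^p(a_1) \subset \tau_k^{p+1}(a_1)$, and the generalised $(\dagger)$ realisations, all of which check out---supplies precisely those details. You even go slightly beyond the paper by verifying that $\mathcal{S}_{\tau_k}$ is a proper subset of $\mathcal{L}^2$.
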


\subsection{Random metallic means} \label{SEC:metallic}
We now move on to the metallic mean substitutions $\vartheta_m$ for $m \geq 2$ and their subshifts $X_m$. Recall that $\vartheta_m$ is defined by
\[
\vartheta_m \colon a \mapsto \{a^iba^{m-i} \mid 0 \leq i \leq m\}, b \mapsto \{a\}.
\]
As $m$ is fixed, let $\vartheta\coloneqq \vartheta_m$ for the remainder of this section in order to simplify notation.
Recall that the metallic mean sequences $(z_n)_{n \geq 0}$ satisfy the linear recurrence $z_{n} = mz_{n-1} + z_{n-2}$ with initial conditions $z_0 = z_1 = 1$. Recall further that each sequence admits a Zeckendorf theorem (Theorem \ref{Zrnm}) and so every natural number $n$ has a unique representation as a sum $\sum_{i=1}^r \e_i z_i$ with $\e_i \in \{0, \ldots, m\}$ and if $\e_{i+1} = m$ then $\e_i = 0$. Note that our representations can now have Zeckendorf digits greater than 1. For example let $m = 3$ so that
\[
(z_i)_{i \geq 0} = (1, 1, 4, 13, 43, 142, 469, 1420, \ldots)
\]
and let $n = 1404$. Then 
\[
1404 = 3(426) + 0(142) + 2(43) + 3(13) +0(4) +1(1) = 3z_6 + 0z_5 + 2z_4 + 3z_3 + 0z_2 + 1z_1
\]
and so has degree-$3$ Zeckendorf representation $[1404] = 302301$.

As in previous examples, observe that if $[n] = \e_r \cdots \e_1$ is the Zeckendorf representation for $n$ and
\[
u \in \left(\vartheta^{r}(b)\right)^{\e_r} \cdots \: \left(\vartheta^{}(b)\right)^{\e_1},
\]
then $[|u|] = [n] = \e_r \cdots \e_1$, and for any $v \in \vartheta(u)$, $[|va^j|] = \e_r \cdots \e_1 j$ for $0 \leq j \leq m$. Let $\mathcal{S}_m =\{a^iba^{m-i} \mid 0 \leq i \leq m\}$.
\begin{lemma}\label{LEM:tech1_metallic}
If $\vartheta^p(a)u\mathcal{S}_m$ contains a legal word for some word $u$, then for every $0 \leq j \leq m$, each of the sets $\vartheta^p(a)\vartheta(u)(\vartheta(b))^j\mathcal{S}_m$ contains a legal word.
\end{lemma}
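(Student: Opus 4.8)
The plan is to follow the same template as the proof of Lemma~\ref{LEM:tech1}, upgrading it from the single buffer letter available in the Fibonacci case to the family of buffers $a^j = (\vartheta(b))^j$, $0 \leq j \leq m$, that the metallic mean setting demands. Since $(\vartheta(b))^j$ is the single word $a^j$ and $\mathcal{S}_m = \vartheta(a)$, the target is to produce, for each fixed $j$, a legal word of the form $W\,v\,a^j s'$ with $W \in \vartheta^p(a)$, $v \in \vartheta(u)$ and $s' \in \mathcal{S}_m$.

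First I would fix a witness to the hypothesis: a legal word $Wus$ with $W \in \vartheta^p(a)$ and $s = a^iba^{m-i} \in \mathcal{S}_m$. Applying $\vartheta$ and using that the image of a legal word is legal, and that every realisation of that image is legal, I obtain a legal word $\vartheta(W)\,\vartheta(u)\,z$ for any chosen realisation $z \in \vartheta(s)$, with $\vartheta(W) \in \vartheta^{p+1}(a)$. The heart of the argument is then to choose, for each $0 \leq j \leq m$, a realisation $z$ whose prefix is exactly $a^j s'$ for some $s' \in \mathcal{S}_m$; since $z$ begins with $a^j s'$, the word $\vartheta(W)\,\vartheta(u)\,a^j s'$ is a subword of a legal word and hence legal.

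To expose such a prefix I would use the single $b$ carried by each block of $\vartheta(s) = (\mathcal{S}_m)^i\,a\,(\mathcal{S}_m)^{m-i}$: the first $a$-run of $z$ can be tuned to any length in $\{0,\ldots,m\}$ (or $\{1,\ldots,m+1\}$ when $s$ begins with $b$) by choosing the first block $a^k b a^{m-k}$, while the $a$-run immediately following the first $b$ can always be padded to length at least $m$ by taking a subsequent block to be $a^m b$, which is possible because $m \geq 2$ guarantees enough blocks after that first $b$. Concretely, for $j \geq 1$ I would aim for the prefix $a^j b a^m = a^j \cdot (b a^m)$ with $s' = ba^m \in \mathcal{S}_m$, and for the exceptional case $j=0$ with $s$ beginning with $b$ I would instead read off $s' = aba^{m-1}$. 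Verifying that this choice is available for every $s \in \mathcal{S}_m$ and every $j$, including the borrowing of $a$'s across two consecutive inflation blocks, is the step I expect to be the main obstacle, as it is the genuinely new combinatorics relative to Lemmas~\ref{LEM:tech1} and~\ref{LEM:tech1_trib}.

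Finally, to return the prefix from level $p+1$ to level $p$, I would argue exactly as in Lemma~\ref{LEM:tech1}: since $a \mapsto ba^m$ is a realisation of $\vartheta$, we have the inclusion $\vartheta^p(b)(\vartheta^p(a))^m \subseteq \vartheta^{p+1}(a)$, so $\vartheta^{p+1}(a)$ admits words having a level-$p$ inflation word of type $a$ as a suffix. Passing to the suffix of the legal word $\vartheta(W)\,\vartheta(u)\,a^j s'$ that begins at this final $\vartheta^p(a)$-block, and again using that subwords of legal words are legal, yields a legal word in $\vartheta^p(a)\,\vartheta(u)\,a^j\mathcal{S}_m$, as required. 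Repeating the choice of realisation for each $0 \leq j \leq m$ then completes the argument.
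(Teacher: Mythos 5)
Your proposal is correct and takes essentially the same route as the paper's proof: apply $\vartheta$ to a witness, tune the realisations of the $\vartheta(a)$-blocks inside $\vartheta(s)$ (a first block $a^{j}ba^{m-j}$ or $a^{j-1}ba^{m-j+1}$ to set the leading $a$-run, a later block $a^{m}b$ to pad the run after the first $b$, with $s'=aba^{m-1}$ in the exceptional case $j=0$ when $s$ begins with $b$), and return from level $p+1$ to level $p$ via the inclusion $\vartheta^{p}(b)(\vartheta^{p}(a))^{m}\subseteq\vartheta^{p+1}(a)$. The only difference is organisational: the paper splits into cases according to the position $i$ of the letter $b$ in the witness $s=a^{i}ba^{m-i}$ (namely $i=0$, $i=1$, $i\geq 2$), whereas you split according to $j$ and whether $s$ begins with $b$, but the realisation choices are the same.
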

The proof for this lemma is similar in spirit to the proof of Lemma \ref{LEM:tech1}, however the generality makes it necessary to consider several cases which differ a fair amount from the construction in the case $m = 1$ for the random Fibonacci. Therefore we spell out every detail of the proof here for completeness.
\begin{proof}
Without loss of generality, we may assume that $m \geq 2$ as the case $m=1$ is covered by Lemma \ref{LEM:tech1}, where $\vartheta = \vartheta_1$ is the random Fibonacci substitution.
Suppose that some word in $\vartheta^p(a)ua^iba^{m-i}$ is legal.

\noindent\textbf{Case 1.} If $i = 0$, then some word in $\vartheta^p(a)uba^m$ is legal, and so by applying $\vartheta$, there must be a legal word in $\vartheta^p(b)(\vartheta^p(a))^{m-1}\vartheta^p(a)\vartheta(u)a(\vartheta(a))^m$. Such a word then contains a subword of the form $\vartheta^p(a) \vartheta(u)a \vartheta(a)\vartheta(a)$. Assuming $j \geq 1$, choose the second-to-rightmost realisation of $\vartheta(a)$ to be $a^{j-1}ba^{m-j+1}$ and the rightmost realisation to be $a^mb$. So some word in 
\[
\vartheta^p(a) \vartheta(u)a a^{j-1}ba^{m-j+1}a^mb = \vartheta^p(a) \vartheta(u)a^jba^{2m-j+1}b
\]
is legal and hence contains a legal subword in 
\[
\vartheta^p(a) \vartheta(u)a^jba^m \subset \vartheta^p(a) \vartheta(u)(\vartheta(b))^j\mathcal{S}_m.
\]
If $j = 0$, then choose the second-to-rightmost realisation of $\vartheta(a)$ to be $ba^m$. So some word in $\vartheta^p(a) \vartheta(u)ab a^m\vartheta(a)$ is legal and hence contains a legal word in
\[
\vartheta^p(a) \vartheta(u)a b a^{m-1} \subset \vartheta^p(a) \vartheta(u)\mathcal{S}_m.
\]

\noindent\textbf{Case 2.} If $i = 1$ then some word in $\vartheta^p(a)uaba^{m-1}$ is legal, and so by applying $\vartheta$, there must be a legal word in $\vartheta^p(b)(\vartheta^p(a))^{m-1}\vartheta^p(a)\vartheta(u)a^jba^{m-j}a(\vartheta(a))^{m-1}$ for any $0 \leq j \leq m$. Such a word then contains a subword of the form $\vartheta^p(a)\vartheta(u)a^jba^{m-j}a\vartheta(a)$. Choose the realisation $a^mb$ in $\vartheta(a)$, and so there is some legal word in $\vartheta^p(a)\vartheta(u)a^jba^{m-j}aa^mb$, hence $\vartheta^p(a)\vartheta(u)a^jba^{m} \in \vartheta^p(a)\vartheta(u)a^j\mathcal{S}_m$ contains a legal word.

\noindent\textbf{Case 3.} If $i \geq 2$ then some word in $\vartheta^p(a)ua^iba^{m-i}$ is legal, and so by applying $\vartheta$, there must be a legal word in $\vartheta^p(b)(\vartheta^p(a))^{m-1}\vartheta^p(a)\vartheta(u)a^jba^{m-j}a^mb(\vartheta(a))^{i-2}\vartheta(b)(\vartheta(a))^{m-i}$ for any $0 \leq j \leq m$. Such a word then contains a subword of the form $\vartheta^p(a)\vartheta(u)a^jba^{m}$ and so there is some legal word in $\vartheta^p(a)\vartheta(u)a^j\mathcal{S}_m = \vartheta^p(a)\vartheta(u)(\vartheta(b))^j\mathcal{S}_m$.
\end{proof}
\begin{lemma}\label{LEM:tech2_metallic}
Let $p \geq 1$ be given and let $n\geq 1$ be a natural number whose Zeckendorf representation $[n] = \e_r \cdots \e_1$ has $r=p$ and $\e_r \geq 1$. Then for all words $u \in (\vartheta^p(b))^{\e_p} \cdots (\vartheta(b))^{\e_1}$, one of the words in $u\mathcal{S}_m$ appears as a prefix of some word in $\vartheta^{p+1}(a)$.
\end{lemma}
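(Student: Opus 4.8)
The plan is to prove the lemma by induction on $p$, following closely the structure of Lemma~\ref{LEM:tech2} for the random Fibonacci substitution. The essential new feature is that, by the Metallic Mean Zeckendorf Theorem (Theorem~\ref{Zrnm}), the least significant digit $\e_1$ now ranges over $\{0,1,\ldots,m\}$ rather than just $\{0,1\}$, so the binary case split of Lemma~\ref{LEM:tech2} is replaced by an argument that is uniform in the value of $\e_1$, and the two realisations $ab,ba$ of $\vartheta_1(a)$ are replaced by a choice among the $m+1$ realisations $a^iba^{m-i}$ of $\vartheta(a) = \mathcal{S}_m$. Throughout, I will use that $\vartheta(b) = \{a\}$, so that $(\vartheta(b))^{\e_1} = \{a^{\e_1}\}$.

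For the base case $p = 1$ we have $[n] = \e_1$ with $1 \le \e_1 \le m$ and $u = a^{\e_1}$. I would take $s = a^{m-\e_1}ba^{\e_1} \in \mathcal{S}_m$, so that $us = a^{m}ba^{\e_1}$, and realise the first two $\vartheta(a)$-blocks of a word in $\vartheta^2(a)$ as $a^mb$ and $a^{\e_1}ba^{m-\e_1}$; this is possible since $m \ge 2$ (the case $m = 1$ being exactly Lemma~\ref{LEM:tech1}), and the resulting word begins with $a^mba^{\e_1}b\cdots$, hence has $us$ as a prefix.

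For the inductive step, suppose the claim holds up to $p$, and let $[n] = \e_{p+1}\cdots\e_1$ with $\e_{p+1} \ge 1$. Since any realisation of the product set $(\vartheta^{p+1}(b))^{\e_{p+1}} \cdots (\vartheta^2(b))^{\e_2}$ is $\vartheta$ applied to a word in $(\vartheta^p(b))^{\e_{p+1}} \cdots (\vartheta(b))^{\e_2}$, and $(\vartheta(b))^{\e_1} = \{a^{\e_1}\}$, I can write $u = \vartheta(u')a^{\e_1}$ for some such $u'$. The truncated digit string $\e_{p+1}\cdots\e_2$ is a valid Zeckendorf representation with leading digit at least $1$, so the inductive hypothesis supplies an $s' = a^{i'}ba^{m-i'} \in \mathcal{S}_m$ with $u's'$ a prefix of some $W' \in \vartheta^{p+1}(a)$. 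Applying $\vartheta$ and using that substitution preserves the prefix relation, the word $\vartheta(u')\vartheta(s')$ is a prefix of a word in $\vartheta^{p+2}(a)$ for every realisation of $\vartheta(s')$. As $us = \vartheta(u')a^{\e_1}s$, it therefore suffices to produce a realisation of
\[
\vartheta(s') = \vartheta(a)^{i'}\,a\,\vartheta(a)^{m-i'}
\]
that begins with $a^{\e_1}s$ for a suitable $s \in \mathcal{S}_m$.

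I expect this final realisation step to be the main obstacle, and I would handle it by cases on $i'$. When $i' \ge 1$ the word $\vartheta(s')$ begins with a full block $a^{j_1}ba^{m-j_1}$, and when $i' = 0$ it begins with $a\,\vartheta(a)^m$; in either case I would take $s = a^kba^{m-k}$ with $k$ small (typically $k = 0$ or $k = m - \e_1$), fix the leading block count $j_1$ so that the initial $a$-run equals $\e_1 + k \le m$, and then absorb enough of the following $\vartheta(a)$- and $\vartheta(b)$-blocks to extend the $a$-run after the first $b$ up to length $m - k$. The only genuine work is the bookkeeping of these run lengths across block boundaries, together with the elementary check that $a^{\e_1}s$ fits inside $\vartheta(s')$, which holds because $\e_1 + m + 1 \le 2m + 1 \le z_3$. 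This is routine but, as with Lemma~\ref{LEM:tech1_metallic}, the number of cases makes it worth spelling out in full.
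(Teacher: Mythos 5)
Your proposal is correct and takes essentially the same route as the paper: the paper's own proof of Lemma \ref{LEM:tech2_metallic} is merely the remark that the induction of Lemma \ref{LEM:tech2} goes through with extra cases for $\e_1 \in \{0,\ldots,m\}$ in the inductive step, and your argument (truncate the last digit, apply the inductive hypothesis to $\e_{p+1}\cdots\e_2$, apply $\vartheta$, then choose a realisation of $\vartheta(s')$ beginning with $a^{\e_1}s$ by run-length bookkeeping across block boundaries) is precisely that induction with the details filled in, and the case analysis you sketch does close in every case. The only blemish is the harmless mis-citation of Lemma \ref{LEM:tech1} where Lemma \ref{LEM:tech2} is meant for $m=1$ --- a case the paper excludes anyway, since Section \ref{SEC:metallic} assumes $m \geq 2$.
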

\begin{proof}
As in the case for the random Tribonacci, the proof follows in exactly the same fashion as for Lemma \ref{LEM:tech2} with the caveat that there are a few more cases to consider as in the proof of Lemma \ref{LEM:tech1_metallic} in terms of having multiple possible values for $\e_1 \in \{0, \ldots, m\}$ in the inductive step. We invite the reader to fill in the details.
\end{proof}
\begin{theorem}\label{THM:semi-metallic}
Let $\vartheta$ be the random Fibonacci substitution with associated RS-subshift $X_m$.
The subshift $X_m$ is semi-mixing with respect to the set of words $\mathcal{S}_m = \{a^iba^{m-i} \mid 0 \leq i \leq m\}$.
\end{theorem}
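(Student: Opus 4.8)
The plan is to mirror the arguments of Theorems~\ref{THM:semi-fib} and~\ref{THM:semi-trib} almost verbatim, now feeding in the metallic-mean versions of the two technical lemmas together with the Metallic Mean Zeckendorf Theorem (Theorem~\ref{Zrnm}). Throughout I write $\vartheta = \vartheta_m$. Let $w \in \mathcal{L}$ be an arbitrary legal word. Since $w$ is legal there is a power $p \geq 1$ and a word $w' \in \vartheta^{p+1}(a)$ with $w \prec w'$; I write $w' = xwy$, so that $w$ is followed by the suffix $y$ inside the inflation word $w'$, and set $N \coloneqq |y| + z_p$.

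First I would establish the base case for an arbitrary target length. Given $n \geq N$, consider $n - |y| \geq z_p$ and take its degree-$m$ Zeckendorf representation $[n-|y|] = \e_s \cdots \e_1$; since $n-|y| \geq z_p$ we have $s \geq p$ and leading digit $\e_s \geq 1$. The top $p$ digits $\e_s \cdots \e_{s-p+1}$ form a valid length-$p$ representation with leading digit at least $1$, so for a word $u_0 \in (\vartheta^p(b))^{\e_s} \cdots (\vartheta(b))^{\e_{s-p+1}}$, Lemma~\ref{LEM:tech2_metallic} guarantees that one of the words in $u_0\mathcal{S}_m$ is a prefix of some word in $\vartheta^{p+1}(a)$. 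Concatenating $\vartheta^{p+1}(a)$ on the left then exhibits a legal word in $\vartheta^{p+1}(a)\,u_0\,\mathcal{S}_m$ as a subword of an element of $\vartheta^{p+1}(aa)$, and hence a legal word in $w\,y\,u_0\,\mathcal{S}_m$.

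Next I would run the induction that appends the remaining Zeckendorf digits $\e_{s-p},\ldots,\e_1$ one at a time. The key observation recorded just before Lemma~\ref{LEM:tech1_metallic} is that if $[|u|] = \delta_k \cdots \delta_1$, then $[|v a^j|] = \delta_k \cdots \delta_1 j$ for any $v \in \vartheta(u)$ and any $0 \leq j \leq m$; so appending the digit $\e_i$ corresponds exactly to passing from $\vartheta^{p+1}(a)\,u\,\mathcal{S}_m$ to $\vartheta^{p+1}(a)\,\vartheta(u)\,(\vartheta(b))^{\e_i}\mathcal{S}_m$. Lemma~\ref{LEM:tech1_metallic} says precisely that legality is preserved under this operation for every admissible value of the appended digit. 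Iterating through $\e_{s-p},\ldots,\e_1$ therefore produces a word $u$ with $|u| = n - |y|$ such that $w\,y\,u\,\mathcal{S}_m$ contains a legal word; setting $\tilde{u} \coloneqq y u$ gives $|\tilde u| = n$ together with a legal word in $w\,\tilde{u}\,\mathcal{S}_m$. Since $\mathcal{S}_m$ is a proper subset of $\mathcal{L}^{m+1}$ (for instance $a^{m+1}\in\mathcal{L}$ but $a^{m+1}\notin\mathcal{S}_m$), Proposition~\ref{pro:semi-mixing} then yields that $X_m$ is semi-mixing with respect to $\mathcal{S}_m$.

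The main thing to get right is the global bookkeeping in the inductive step rather than any single local computation, since the genuinely delicate case analysis (the interaction between the digit $j$ and the internal structure of the realisations of $\vartheta(a)$) has already been absorbed into Lemma~\ref{LEM:tech1_metallic}. Concretely, I would need to confirm that splitting the canonical representation of $n - |y|$ into a length-$p$ head, handled by Lemma~\ref{LEM:tech2_metallic}, and a tail handled by repeated application of Lemma~\ref{LEM:tech1_metallic}, does reach \emph{every} length $n \geq N$; this is exactly where Theorem~\ref{Zrnm} enters, guaranteeing that each $n - |y| \geq z_p$ has such a representation of length at least $p$. One should also double-check that the digit-appending in Lemma~\ref{LEM:tech1_metallic} never violates the carry constraint $\e_{i+1} = m \Rightarrow \e_i = 0$; because we append the genuine Zeckendorf digits of $n - |y|$, this constraint holds automatically, and (as in the remark following Theorem~\ref{THM:semi-fib}) completeness of the metallic-mean sequence would suffice even if one preferred to dispense with uniqueness altogether.
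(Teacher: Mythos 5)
Your proposal is correct and takes essentially the same approach as the paper: the paper's own proof of this theorem simply states that it follows by a routine inductive application of Lemmas~\ref{LEM:tech1_metallic} and~\ref{LEM:tech2_metallic} following the template of Theorems~\ref{THM:semi-fib} and~\ref{THM:semi-trib}, which is precisely what you have carried out. Your explicit splitting of the Zeckendorf representation of $n-|y|$ into a length-$p$ head (handled by Lemma~\ref{LEM:tech2_metallic}) and a tail of digits appended one at a time via Lemma~\ref{LEM:tech1_metallic}, together with the properness check $\mathcal{S}_m\subsetneq\mathcal{L}^{m+1}$, merely makes the bookkeeping more explicit than the paper's sketch.
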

\begin{proof}
As with the proofs of Theorems \ref{THM:semi-fib} and \ref{THM:semi-trib}, this follows from a now routine inductive application of the technical Lemmas \ref{LEM:tech1_metallic} and \ref{LEM:tech2_metallic}, together with the symbolic definition of semi-mixing with respect to $\mathcal{S}_m$.
\end{proof}

Doubtless, these methods can sometimes be extended to other classes of substitutions which satisfy the property that $|\vartheta^n(a_k)| = z_n$ for some letter $a_k$ when the sequence $z_n$ admits a Zeckendorf theorem (or is complete). For instance, the $k$-bonacci and metallic mean sequences both generalise to the sequences $z_n$ satisfying
\[
z_n = mz_{n-1} + \sum_{i=2}^k z_{n-i}, \quad z_0= \cdots = z_{k-3}=0, \: z_{k-2}=1, \: z_{k-1}=1
\]
for some $m \geq 1$, and such sequences will also admit a Zeckendorf theorem. It stands to reason that our methods can show that the subshift $X_{k,m}$ associated with the random substitutions $\vartheta_{k,m}$ on the alphabet $\mathcal{A}_k = \{a_1, \ldots, a_k\}$ given by
\[
\vartheta_{k,m} \colon 
\begin{cases}
a_i \mapsto \left\{a_1^i a_{i+1} a_1^{m-i} \mid 0 \leq i \leq m\right\}, & {1 \leq i \leq k-1},\\
a_k \mapsto \{a_1\} &
\end{cases}
\]
will also be semi-mixing with respect to the set of words $\mathcal{S}_{k,m} = \left\{a_1^i a_{j} a_1^{m-i} \mid 0 \leq i \leq m, 2 \leq j \leq k\right\}$. One might call such random substitution \emph{metallic Pisa substitutions} to pay homage to the birth place of Fibonacci.

One characteristic of our methods is that they rely heavily on the fact that our substitutions are random. That is, we make constant use of the fact that letters in our alphabet have multiple images under the random substitution. One is therefore naturally lead to ask whether semi-mixing holds in the deterministic setting for the usual subshift $X_{\operatorname{Fib}}$ associated with the Fibonacci substitution $\theta \colon a \mapsto ab, b \mapsto a$ and similarly for the tribonacci and metallic mean substitutions. We would expect that in the deterministic setting, where there is far less freedom for local exchanges of words, semi-mixing is much more difficult to be satisfied when the subshift is not also topologically mixing.
It is well known that the subshift associated with the Fibonacci substitution, as well as its cousins the irreducible Pisot substitutions, are all not topologically mixing. This follows from the fact that they are minimal system and are not weakly mixing, as mentioned by Kenyon, Sadun and Solomyak \cite{KSS}.
As such, we tentatively conjecture that irreducible Pisot substitutions all have associated subshifts which are not semi-mixing.

\section*{Acknowledgements}

The authors wish to thank Robbert Fokkink for introducing us to generalised Zeckendorf representations and Ethan Akin for helpful discussions regarding the definition of semi-mixing.

\medskip

\noindent MSC2010: 11B39, 37A25, 37B10

\end{document}